\documentclass{elsarticle}

\textheight=230mm
\topmargin=-10mm
\textwidth=165mm
\oddsidemargin=-5mm
\raggedbottom
\parindent=0pt
\parskip=10pt

\newcommand{\be}{\begin{eqnarray}}
\newcommand{\ee}{\end{eqnarray}}
\newcommand{\ben}{\begin{eqnarray*}}
\newcommand{\een}{\end{eqnarray*}}

\newcommand{\Gv}{G\! -\! v}

\newcommand{\ibar}{\bar{i}}

\newtheorem{theorem}{Theorem}
\newtheorem{lemma}[theorem]{Lemma}

\newenvironment{proof}[1][Proof]{\begin{trivlist}\item[\hskip \labelsep {\bfseries #1}]}{\end{trivlist}}

\newenvironment{definition}[1][Definition]{\begin{trivlist}\item[\hskip \labelsep {\bfseries #1}]}{\end{trivlist}}

\newcommand{\smallvertices}[1]{ \foreach \pos/\name in {#1} \node[smallvertex] (\name) at \pos {}; }

\newcommand{\edges}[1]{ \foreach \source/ \dest in {#1} \path[edge] (\source) -- (\dest); }

\newcommand{\basicgraph}[3]{ \tikz[scale=#1]{\smallvertices{#2} \edges{#3}} }

\usepackage{graphicx}
\usepackage{amssymb}

\usepackage{pgfplots}
\usetikzlibrary{pgfplots.groupplots}
\usetikzlibrary{shapes,arrows}

\begin{document}

\tikzstyle{vertex}=[circle, draw, inner sep=0pt, minimum size=5pt]
\tikzstyle{smallvertex}=[circle, fill=black, draw, inner sep=0pt, minimum size=4pt]
\tikzstyle{fillvertex}=[circle, fill=black, draw, inner sep=0pt, minimum size=5pt]
\tikzstyle{dot}=[circle, draw, fill=black, inner sep=0pt, minimum size=2pt]
\tikzstyle{doublevertex}=[circle, draw, inner sep=0pt, minimum size=5pt, double distance=1pt]
\tikzstyle{stub}=[circle,  inner sep=0pt, minimum size=1pt]
\tikzstyle{bigvertex}=[circle, draw, inner sep=0pt, minimum size=12pt]
\tikzstyle{dmvertex}=[diamond, draw, fill, inner sep=0pt, minimum size=4pt]
\tikzstyle{edge} = [draw,-]
\tikzstyle{thickedge} = [draw,thick,-]
\tikzstyle{dotedge} = [draw,dotted]
\tikzstyle{dashedge} = [draw,dashed]

\title{Graph reconstruction and generation from one card and the degree sequence}

\author{Andrew M. Steane}
\ead{a.steane@physics.ox.ac.uk}
\address{
Department of Atomic and Laser Physics, Clarendon Laboratory,\\
Parks Road, Oxford, OX1 3PU, England.
}

\date{\today}

\begin{abstract}
Many degree sequences can only be realised in graphs that contain a `ds-completable card',
defined as a vertex-deleted subgraph in which the erstwhile neighbours of the deleted vertex
can be identified from their degrees, if one knows the degree sequence of the original graph.
We obtain conditions on the degree sequence, such that graphs whose degree sequence satisfies
one of the conditions must contain such a card. The methods 
allow all such sequences on graphs of order up to 10 to be identified,
and some fraction of the sequences for larger graphs. Among other
applications, this can be used to reduce the computational task of generating 
graphs of a given degree sequence without duplicates.
\end{abstract}

\begin{keyword}
graph reconstruction \sep unigraphic \sep graphic sequence 
\end{keyword}

\maketitle

If any single vertex is deleted from a regular simple graph $G$ having more than 2 vertices, 
then one obtains
a graph $G'$ with the following interesting property: there is one, and only one,
regular graph (up to isomorphism) having $G'$ as an induced sub-graph. (Proof: the graph
can be uniquely constructed from $G'$ as follows: 
if $G'$ is empty then add to it a further isolated vertex, otherwise
add to $G'$ a further vertex with edges to all vertices
in $G'$ having the smallest degree in $G'$.) In this paper we investigate a more general
property that is related to this. It can happen that, for a given graphic sequence $\pi$ (i.e. a
non-decreasing integer sequence which is the degree sequence of at least one simple graph), 
any graph having that degree sequence contains at least one sub-graph $G'$ such that the original
graph can be uniquely obtained from $G'$ and $\pi$. One might say that such a sub-graph
is uniquely `completable', given $\pi$, or that the original graph $G$ is 
uniquely reconstructible from $G'$ and $\pi$.

Such graphs are interesting in the context of the {\em reconstruction conjecture} of
(Kelly and Ulam 1942), and in the context of graph generation. The former is one of the most 
well-known unsolved problems in graph theory. It concerns the question, whether or
not the collection of vertex-deleted subgraphs of any given graph uniquely specify the
graph (a precise statement is given below). 
Graph generation concerns the construction of graphs with some specified properties, where
typically one wishes to obtain an exhaustive list without duplicates.

The notion of reconstructing a graph after deletion of a vertex can also be seen as a form
of error-correction, or recovery of information from noisy data.

Let $G-v$ be the vertex-deleted subgraph of a simple graph $G$, obtained by deleting vertex $v$.
For given $G$, is there a $v$ such that it possible to deduce $G$ from $G-v$ if the degree 
sequence of $G$ is known? For most graphs, the answer is no, but the class of graphs
for which the answer is yes is large: it contains approximately half the graphs on up to 9 vertices,
and 37\% of the graphs on 10 vertices, for example. 
In many cases, the degree sequence itself forces the graph to have this property, in the
sense that all graphs realising the given degree sequence contain such a subgraph.
We would like to identify these degree sequences.

For any graph, the degree sequence is itself recoverable from the deck.
The notion of augmenting a card by degree-related information was considered by Ramanachandran
\cite{Raman81}; see also \cite{Barrus10}. These authors were interested in reconstruction
from several cards, where for each card the degree of the deleted vertex is given. The present
work is concerned with reconstruction of a graph from a single card, when the whole
degree sequence of the graph is given. The methods to be discussed also bear on related
issues, such as the question, for any given graph of order $n$, in how many ways 
may a vertex be added so as to obtain a graph of order $n+1$ with a given `target' degree sequence.

\section{The reconstruction conjecture and terminology}

A graph $G = [V,E]$ is a set of vertices $V$ and edges $E$. We restrict attention to simple graphs
(those which are undirected and with no self-loops or multiple edges).
The {\em degree sequence} of $G$
is the multiset of degrees of the vertices, in non-descending order. A vertex of degree 1 is called
a {\em pendant}. 
The set of neighbours of a vertex $v$ will be denoted $N(v)$. The set $N(v) \cup \{v\}$
(called the closed neighbourhood of $v$) will be denoted $N[v]$.

For any graph $G$, a vertex-deleted subgraph $\Gv$ is called a {\em card}, and the multiset of
cards associated with all the vertices of $G$ is called the {\em reconstruction deck} of $G$,
or {\em deck} for short. 
Two cards are deemed `the same' if they are isomorphic and distinct if they are not. 
A property of $G$ which can be obtained from the deck, without
direct access to $G$, is said to be {\em recoverable}. For example, the number of vertices 
$n = |V(G)|$
is recoverable since it is equal to the number of cards. The number of edges 
$m = |E(G)|$ is recoverable since the number of edges appearing on all the cards is
$(n-2)m$ (since each edge appears on all but two of the cards). Further recoverable information
includes the degree sequence of $G$, and
for any given card $\Gv$, the degree sequence, in $G$,
of the neighbours of the associated vertex $v$.

A recoverable binary property, such as membership of a class, is said to be
{\em recognisable}. For example, the condition that the graph is regular is recognizable,
since it can be determined from a recoverable property, namely the degree sequence.

The question arises, whether a graph can be reconstructed in a unique way
from its deck. The graphs on two vertices cannot (they both have
the same deck). An automated (i.e. computer) calculation has confirmed that
graphs on 3 to 11 vertices can \cite{97McKay}, and for larger $n$ the question is open
apart from some special classes of graphs (see \cite{Bondy91,Babai95,14Bartha} for a review).
The {\em Reconstruction Conjecture} (Kelly and Ulam 1942) is the conjecture
that every graph having more than two vertices can be reconstructed in a unique
way (up to isomorphism) from its deck \cite{Kelly42,Kelly57,Ulam60}. 
The conjecture is known to be true for most graphs
(that is, of all graphs of given order, most are reconstructible) \cite{Bollobas90,McMullen07}. However, despite
the wealth of information about a graph that is provided by its deck, no
one has yet found out how to do the detective work and show that
the evidence to be found in each deck points to one and only one  `culprit' graph.
(It is not sufficient to find, for any given deck, a graph which has that deck.
Rather, one must show that no other graph has that same deck.)

A standard way to proceed in graph reconstruction problems is by a combination of
`recognise and recover'. One finds a reconstruction method which only applies to some
class of graphs, and one shows that membership of the class is recognisable.

\section{ds-reconstructibility}

It can happen that a reconstruction method may apply to all graphs whose degree
sequence has some given property. Such methods are useful because the degree
sequence is very easy to recover from the deck, or from any graph whose reconstructibility
is in question, and also because the degree sequence can be used as a starting-point for
generating sets of graphs. This connection can be used either to make
graph generation more efficient (see section \ref{s.generate}), or to avoid it
altogether when one is interested in the reconstruction problem.
In the latter case, if one wanted to use a computer to test for reconstructibility
all graphs of some given property, one might proceed by generating, for each degree sequence,
all graphs of that degree sequence and having the property, and testing each. If it is
already known that all
graphs of a given degree sequence are reconstructible, than one can avoid parts of
such a search. 

Before approaching the issue in general, first let us note some well-known and simple cases:

\begin{lemma}  \label{simple}
A graph with any of the following properties is reconstructible.\
\begin{enumerate}
\item There is an isolated vertex.
\item There is a full vertex (one of degree $d=n-1$).
\item  No pair of degrees differs by 1.
\item  There is a vertex of degree $d_v$ such that, after that degree is removed from
the degree sequence, the resulting sequence is one in which no pair of degrees differs by 1
(for example, the sequence 
$2, 2, 2, 4, 4, 4, 5, 6, 6, 9$ has this property).
\end{enumerate}
\end{lemma}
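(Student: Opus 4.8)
The plan is to treat all four cases through a single mechanism: recovering the neighbourhood of a suitably chosen deleted vertex from the card together with $\pi$. Fix a card $\Gv$ and write $\pi$ for the degree sequence of $G$. First I would observe that the degree $d_v$ of the deleted vertex is itself determined: since deleting $v$ removes exactly $d_v$ edges, the degree sum of $\Gv$ equals that of $G$ less $2d_v$, so $d_v$ is read off from $\pi$ and $\Gv$. Reconstructing $G$ then amounts to deciding, for each vertex $u$ of $\Gv$, whether $u \in N(v)$: a neighbour of $v$ appears in $\Gv$ with degree $\deg_G(u)-1$, whereas a non-neighbour keeps its degree $\deg_G(u)$. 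Thus the whole problem reduces to \emph{neighbour identification} from the card-degrees.

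Next I would dispatch the two extreme cases, where this identification is vacuous. If $\pi$ contains a $0$ (case 1), delete the isolated vertex: there are no neighbours to locate, and one simply restores an isolated vertex, which is forced by $\pi$. If $\pi$ contains $n-1$ (case 2), delete the full vertex: now \emph{every} vertex of $\Gv$ is a neighbour, so one restores a vertex joined to all of $\Gv$. Either way the reconstruction is unique.

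The core is case 3. Here I would argue that, for every vertex $u$ of $\Gv$ with card-degree $c$, exactly one of $c$ and $c+1$ occurs in $\pi$: indeed $\deg_G(u)\in\{c,c+1\}$, and the hypothesis that no two degrees of $G$ differ by $1$ forbids both values from being present. Consequently $u$ is a non-neighbour precisely when $c$ lies in $\pi$ and a neighbour precisely when $c+1$ lies in $\pi$. This classification is forced by $\Gv$ and $\pi$ alone, so $N(v)$ --- and hence $G$ --- is uniquely determined. Because the argument uses no special feature of $v$, it applies to every card; and since $\pi$ is recoverable from the deck, unique reconstruction from a single card upgrades to reconstructibility from the deck (any graph with the same deck shares $\pi$ and the same card, and the deterministic procedure returns a single graph).

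Finally, case 4 generalises case 3 by choosing $v$ to be a vertex of the distinguished degree $d_v$. After its deletion the remaining vertices have original degrees forming $\pi$ with one copy of $d_v$ removed; call this $\pi'$, which by hypothesis has no two degrees differing by $1$. Repeating the dichotomy with $\pi'$ in place of $\pi$ identifies $N(v)$ exactly as before. The point I expect to need the most care is \emph{why} removing the single value $d_v$ suffices: a neighbour of original degree $d_v+1$ would otherwise present card-degree $d_v$ and collide with a genuine non-neighbour of degree $d_v$, and it is precisely the deletion of $v$ that empties that value from the pool of surviving degrees, breaking the tie. I would also verify the deck-level uniqueness here: any graph sharing the deck has the same $\pi$, its copy of this card is obtained by deleting a vertex whose degree the sum argument pins to $d_v$, so the same forced reconstruction recovers it.
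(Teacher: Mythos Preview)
Your argument is correct and follows the same mechanism as the paper: in each case you exhibit a card on which every vertex can be classified as neighbour or non-neighbour of the deleted vertex purely from degree information, and hence the graph is uniquely rebuilt. Your version is somewhat more explicit and self-contained---you work directly from $\pi$ and the single card via the $c$ versus $c+1$ dichotomy, whereas the paper's one-paragraph proof invokes the standard fact that the degree sequence of the neighbours of $v$ is recoverable from the deck---but the underlying idea is identical.
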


\begin{proof}
These examples are straightforward. In each case, there exists a card
$\Gv$ 
for which all the neighbours of $v$ are recognisable by their degree (using the fact that the
degree sequence of the neighbours of $v$ is recoverable, and their degrees in $\Gv$ are
one less than their degrees in $G$). 
In case 3, this is true of any card, and in case 4 it is true of the card associated
with the special vertex.   \qed  \end{proof}

{\bf Corollary}. Regular graphs are reconstructible; Euler graphs are reconstructible.

This follows immediately. The former was noted by Kelley 1957 and
Nash-Williams \cite{78NashWilliams}.

We would now like to generalize from these simple cases. That is, we want to define some
reasonably natural class of graphs that can be reconstructed by using degree information,
and then enquire how members of the class can be recognised.

To this end, let us introduce the following concept:

\begin{definition}
A card $\Gv$ is {\em ds-completable} if, for each degree $d$,  vertices of degree $d$ in the card
are either all neighbours or all non-neighbours of $v$ in $G$. 

We shall also say that a vertex $v$ is ds-completable in $G$
if the card $\Gv$ is ds-completable.
\end{definition}

For example, all the cards of a regular graph are ds-completable, and in the fourth case of lemma
\ref{simple} the card $\Gv$ is ds-completable, for the vertex $v$ defined there.

The idea of this definition is that, if a card has this property, then the set of neighbours of
the vertex which has been deleted
can be determined from the degree sequences $ds(G)$ and $ds(\Gv)$ without having to
bring in further information about the structure of the graph. For, we already know that the
degree sequence of the neighbours of $v$ is recoverable, so, for any given card $\Gv$, we know
what degrees the erstwhile neighbours of $v$ must have in that card (namely, one less than their
degrees in $G$). If the card has the property given in the definition, then there is no ambiguity in
determining, from the degree information alone, which vertices are the neighbours we want to find
(see the detailed example below).
Having identified them, the graph is reconstructed by introducing the missing vertex and attaching
it to those neighbours.

An automated method to identify the neighbours of $v$ in $\Gv$ is as follows \cite{14Bartha}.
First write out the degree sequence of the graph, and then, underneath it, write out the degree
sequence of $\Gv$, with a blank inserted at the first location where $d_v$ would appear.
For example:
\[
\begin{array}{lcccccccccccc}
G:&&2&2&3&3&3&5&5&6&7&7&9 \\
\Gv:&&1&1&2&2&3&4&4&5&-&7&9
\end{array}
\]
Next, insert vertical bars so as to gather the groups of given degree in the second line:
\[
\begin{array}{lccc|cc|c|cc|c|c|c|c}
G:&&2&2&3&3&3&5&5&6&7&7&9 \\
\Gv:&&1&1&2&2&3&4&4&5&-&7&9
\end{array}
\]
Then one inspects the groups. Since the degree sequence of the neighbours is recoverable, we
know which degrees in the lower line are candidates to belong to neighbours of $v$. If for a given
candidate group in the lower line, all the items in the top line for that group have the same
degree $d$, then all the vertices of degree $d-1$ in $\Gv$ are neighbours of $v$. If this
happens for all such groups, then no ambiguity arises, so the card can be completed.
The example shown above has this property. It is owing to the fact that, 
whereas $v$ has neighbours of degree 3,6,7, which might in this example lead to ambiguity,
it is also adjacent to all the vertices of degree 2,5,6, which removes the ambiguity.

If, instead, the neighbours of $v$ had degrees $2,3,3,5,6,7,9$, then the procedure would give
this table:
\[
\begin{array}{lcc|ccc|c|c|cc|c|c|c}
G:&&2&2&3&3&3&5&5&6&7&7&9 \\
\Gv:&&1&2&2&2&3&4&5&5&6&-&8
\end{array}
\]
Now there are two groups (those for $d=2$ and $d=5$ in $\Gv$) where the items in the upper
line are not all the same, and therefore we have two sets of vertices in $\Gv$ where we can't tell,
from this information alone, which are neighbours and which non-neighbours of $v$.

With this concept in mind, we can define an interesting class of graphs:

\begin{definition}
A graph $G$ is {\em ds-reconstructible} iff its deck contains a ds-completable card.
\end{definition}

\begin{definition}
A degree sequence is {\em ds-reconstruction-forcing} (or {\em forcing} for short)
iff it is not possible to construct a non-ds-reconstructible graph realizing that degree
sequence.
\end{definition}

(Equivalently, a graphic sequence is forcing iff every graph realizing it is 
ds-reconstructible, and non-graphic sequences are deemed to be forcing.)

A forcing degree sequence requires or forces the presence of a ds-completable card in any
associated graph. The use of the term `forcing' here is similar to, but not identical to, its use in \cite{Barrus08}.
In \cite{Barrus08} the authors use the term `degree-sequence-forcing' to describe a class of graphs.
A class is said to be `degree-sequence-forcing' if whenever any realization of a graphic 
sequence $\pi$ is in the class, then every other realization of $\pi$ is also in the class. In the present
work we use the term `forcing' to describe a class of degree sequences. The set of
graphic sequences is divided into three distinct classes: those for which every realisation is
ds-reconstructible, those for which no realization is ds-reconstructible, and those for which some
realizations are ds-reconstructible and some are not. The first of these classes is under study here.

If we can show that a given degree sequence is ds-reconstruction forcing, then we 
shall have proved the reconstructibility of all graphs realising that degree sequence.
We shall also have proved a stronger result, namely that for every graph $G$ realising the
forcing degree sequence $\pi$, there is a graph $G'$ on one fewer vertices,
such that $G$ is the only graph realising $\pi$ that can be obtained by adding a vertex
and edges to $G'$. This is relevant to graph generation, which we discuss in section \ref{s.generate}.

It is the main purpose of this paper to present methods to determine whether or not
a given degree sequence is ds-reconstruction-forcing.
Even if the reconstruction conjecture were to be proved by other methods, it would still remain
an interesting question, which degree sequences are ds-reconstruction-forcing, and whether
or not they can be easily identified. Connections to closely related ideas in graph theory are
explored briefly in the conclusion.

Our task is in some respects comparable to the task of determining whether or not a given 
integer sequence is graphic (i.e., is the degree sequence of some graph). It is more closely
comparable to the task of determining whether a graphic sequence is
unigraphic.\cite{Johnson80,Tyshkevich00,Kleitman75,Koren76}
That question has been largely solved, in that there exists a small collection of conditions on
the degree sequence that can be evaluated in linear time and which suffice to establish
whether or not a sequence is unigraphic \cite{Kleitman75,Koren76}. Another method is
to construct a realization of a given sequence and apply switching operations to the
graph \cite{Johnson80,Tyshkevich00}.

The {\em switch}, also called {\em transfer}, is an operation which we shall write $\otimes^{ac}_{bd}$,
where $a,b,c,d$ are four distinct vertices of a graph $G$. If edges $ac$, $bd$ are present in the graph,
and $ad$, $bc$ are not, then $G$ is said to {\em admit} the switch $\otimes^{ac}_{bd}$. The image
of $G$ under $\otimes^{ac}_{bd}$ is obtained by replacing edges $ac$, $bd$ by $ad$, $bc$; i.e.
$\otimes^{ac}_{bd} G = G - ac - bd + ad + bc$. Such an operation preserves the degrees of all
the vertices, and furthermore it may be shown that one can move among all graphs of given degree
sequence via a finite sequence of switches \cite{Fulkerson65}.

\begin{lemma}   \label{lemswtich}
If $\Gv$ is ds-completable, then $(\otimes^{ac}_{bd}G) - v$ is ds-completable when 
$v \notin \{a,b,c,d\}$.
\end{lemma}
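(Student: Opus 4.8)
The plan is to show that the switch alters neither the data that the definition of ds-completability constrains nor the reference against which that data is tested. Write $G' = \otimes^{ac}_{bd}G$, so that $G'$ is obtained from $G$ by deleting edges $ac, bd$ and inserting edges $ad, bc$. The first thing I would record is that, because $v \notin \{a,b,c,d\}$, none of these four edges is incident to $v$; hence no edge at $v$ is created or destroyed and $N_{G'}(v) = N_G(v)$. Consequently the split of the remaining vertices into neighbours and non-neighbours of $v$ is literally the same in $G'$ as in $G$.

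The second ingredient is the degree-preserving property of the switch noted above: $\deg_{G'}(u) = \deg_G(u)$ for every vertex $u$. I would then compute the degree of an arbitrary $u \neq v$ inside the card. For any graph $H$ the card degree is $\deg_{H-v}(u) = \deg_H(u)$ when $u \notin N_H(v)$ and $\deg_H(u)-1$ when $u \in N_H(v)$. Applying this with $H = G$ and with $H = G'$, and feeding in the two invariances $\deg_{G'}(u) = \deg_G(u)$ and $N_{G'}(v) = N_G(v)$, yields $\deg_{G'-v}(u) = \deg_{G-v}(u)$ for every $u \neq v$. Thus the cards $\Gv$ and $G'-v$ have the same vertex set, assign every vertex the same degree, and carry the same neighbour set $N(v)$.

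From here the conclusion is immediate. Fix any degree value $d$. By the degree identity just obtained, the set of card-vertices of degree $d$ is one and the same set in $G'-v$ and in $\Gv$, and the neighbour-or-non-neighbour status of each of these vertices with respect to $v$ is also unchanged. Since $\Gv$ is ds-completable, this set consists entirely of neighbours of $v$ or entirely of non-neighbours of $v$; the same therefore holds in $G'-v$. As $d$ was arbitrary, $G'-v$ is ds-completable.

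I do not anticipate a real obstacle: the statement is at bottom a bookkeeping fact, and the whole argument rests on the two invariances above. The single point that genuinely requires the hypothesis $v \notin \{a,b,c,d\}$ is the equality $N_{G'}(v) = N_G(v)$; if $v$ were one of the switched vertices its neighbourhood could change and the card degrees could be reshuffled, so that ds-completability need not survive. Keeping that hypothesis in view is the only care the proof demands.
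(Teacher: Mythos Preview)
Your proof is correct and follows essentially the same approach as the paper's: the switch preserves both the neighbourhood of $v$ (since $v \notin \{a,b,c,d\}$) and the degree of every vertex, from which ds-completability of $v$ is immediately inherited. The paper compresses this into a single sentence, while you spell out explicitly why the card degrees match vertex by vertex; the underlying argument is the same.
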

{\em Proof.} Such an operation does not change the neighbourhood of $v$, nor the degree of
any vertex, hence if $v$ is completable in $G$, it is also completable in $\otimes^{ac}_{bd}G$. \qed

One way to investigate whether a sequence $\pi$ is ds-reconstruction forcing is to 
construct a graph $G$ realizing $\pi$, and then, if $G$ is ds-reconstructible, perform
a switch which changes the neighbourhood of the first ds-completable vertex found. 
Such a switch may possibly result in a graph with fewer ds-completable vertices. One continues
switching until a graph is found having no completable vertex, or until the search is
abandoned. If a non-ds-reconstructible graph was found, or if all graphs realizing $\pi$ have
been considered, then one has discovered whether or not $\pi$ is forcing;
otherwise one has an inconclusive result. 

When the number of realizations of a given sequence $\pi$
is small, one can move among them (and also be sure that all cases have been considered), by
exhaustive implementation of switching, or one can construct them all another way. 
In this way the question, whether or not $\pi$ is forcing, can be decided. 
However such an exhaustive method is
prohibitively slow for most sequences. It would be useful to find a method to choose a sequence
of switches
that efficiently arrives at a non-ds-completable $G$ if one exists. I have not discovered
such a method. Instead the present work is devoted to finding conditions on the degree sequence
that must be satisfied by forcing sequences. The following ideas will be useful.

\begin{definition}
A degree $d$ is {\em bad} if the graph contains a vertex of degree $d - 1$.
A degree $d$ is {\em dull} if the graph contains a vertex of degree $d + 1$.
A vertex is bad or dull (or both), according as its degree is bad or dull (or both).
A vertex which is not bad is good. A set of vertices is said to be
good, bad or dull iff all its members are good, bad or dull, respectively.
\end{definition}
Example: in the following degree sequence, the bad degrees are underlined and the dull
degrees are overlined: 
$\overline{2,2},\underline{3,3,3}, 5,5,\overline{7,7}, \overline{\underline{8}}, \underline{9}$.

For every bad vertex, there is at least one dull vertex, and for every dull vertex,
there is at least one bad vertex. Vertices of least degree are never bad; vertices of highest
degree are never dull. If a vertex is bad in $G$ then it is dull in
the complement graph $\bar{G}$, and {\em vice versa}.

Bad neighbours make the job
of reconstruction harder. For, if a vertex $v$ has no bad neighbours, then, in the
associated card $\Gv$, the neighbours of $v$ are immediately recognisable since they have degrees
that do not occur anywhere in the degree sequence of $G$, and this is not true of
any other vertices in $\Gv$. Consequently the card is ds-completable.
If, on the other hand, a vertex has one or more bad neighbours then
this argument does not apply. This does not necessarily mean its card is not ds-completable, however:

\begin{theorem} \label{dsc}   [Mulla 1978]
The card $\Gv$ is ds-completable iff in $G$, no non-neighbour of $v$ (excluding $v$ itself)
has a degree one less than a neighbour of $v$.
\end{theorem}

Another way of stating the theorem is as follows.
{\em For any given vertex $v$ in graph $G$, let $D_v$ be the set of vertices of degree one less than a
neighbour of $v$, and let $N[v]$ be the closed neighbourhood of $v$ in $G$. Then
the card $\Gv$ is ds-completable iff $D_v \subset N[v]$.}

\begin{proof}
This has been proved by Mulla \cite{78Mulla} (see also \cite{14Bartha})
but we present a proof suited to the concepts employed in this paper. The idea is to work along
the degree sequence of the card $\Gv$, determining which degrees must belong to neighbours of $v$.
The only neighbours which are not immediately identifiable are the bad neighbours,
and the condition of the theorem renders those identifiable too. This is because if
all vertices of some degree $d$ in $G$ are either absent from the card $\Gv$
or have had their degree reduced by one, then we know that vertices in the card of degree $d$
must be neighbours of $v$. Conversely, if this is not the case, then there remains an ambiguity
and the neighbours cannot be determined by the degree information alone. \qed  \end{proof}

Note that we are not trying to capture cases of graphs which can be reconstructed 
by methods that involve further considerations in addition to degree information. 
For example, consider the path graph $P_4$. If one of the pendants is
deleted, the resulting card is {\em not} ds-completable (it does not satisfy the definition) because
there is an ambiguity: there are two vertices whose degree makes them candidates 
to be the neighbour of the pendant which was deleted. The fact that in this example
one gets the same graph, irrespective of which of these two is picked, is not under
consideration in the definitions employed here, but we will return to this point at the end.

\begin{lemma}  \label{lem.comp}
A graph $G$ is ds-reconstructible iff its complement $\bar{G}$ is
ds-reconstructible.
\end{lemma}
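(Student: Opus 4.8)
The plan is to prove the lemma by establishing a correspondence between ds-completable cards of $G$ and ds-completable cards of $\bar{G}$. The natural approach is to use Theorem \ref{dsc}, which characterises ds-completability purely in terms of the degree structure around a vertex. Since complementation sends a vertex $v$ of degree $d$ in a graph on $n$ vertices to a vertex of degree $n-1-d$, and swaps neighbours with non-neighbours, I expect the ds-completability condition to transform in a controlled way. I would argue that the card $\Gv$ is ds-completable if and only if the corresponding card $\bar{G}-v$ is ds-completable; from this, the biconditional for the whole graphs follows immediately, since $G$ is ds-reconstructible iff \emph{some} card is ds-completable, and the vertex sets of $G$ and $\bar{G}$ are identical.

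First I would fix a vertex $v$ and translate the condition of Theorem \ref{dsc}. In $G$, the card $\Gv$ is ds-completable iff no non-neighbour of $v$ (other than $v$) has degree one less than some neighbour of $v$. Passing to $\bar{G}$, the neighbours of $v$ become its non-neighbours and \emph{vice versa}, while a vertex of degree $d$ in $G$ has degree $n-1-d$ in $\bar{G}$. The key observation is that ``degree one less'' in $G$ becomes ``degree one more'' in $\bar{G}$: if $\deg_G(u) = \deg_G(w) - 1$ then $\deg_{\bar{G}}(u) = \deg_{\bar{G}}(w) + 1$. So the forbidden configuration in $G$ --- a non-neighbour of degree one below a neighbour --- corresponds in $\bar{G}$ to a \emph{neighbour} of degree one above a \emph{non-neighbour}, which is the same forbidden relationship read in the opposite direction. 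I would verify that the symmetric phrasing of the condition (no bad/good degree mismatch straddling the neighbour/non-neighbour boundary) is manifestly invariant under simultaneously swapping the roles of neighbour/non-neighbour and of ``one less''/``one more''.

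The cleanest way to carry this out is to use the remark already made in the excerpt: ``if a vertex is bad in $G$ then it is dull in the complement graph $\bar{G}$, and \emph{vice versa}.'' Restating Theorem \ref{dsc} in the language of the set $D_v$ (vertices of degree one less than a neighbour of $v$), the condition is $D_v \subset N[v]$. Under complementation this should become the statement that every vertex of degree one \emph{more} than a non-neighbour of $v$ lies in the closed non-neighbourhood, i.e. the complementary inclusion on $\bar{G}$, which is exactly the ds-completability condition for $\bar{G}-v$. I would make this precise by writing out $D_v$ and its analogue $\bar{D}_v$ for $\bar{G}$ and checking $D_v \subset N[v]$ in $G$ holds iff $\bar{D}_v \subset \bar{N}[v]$ in $\bar{G}$.

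The main obstacle I anticipate is bookkeeping at the boundary cases: the vertex $v$ itself is excluded from the relevant sets, and one must be careful that the degree shift by $n-1-d$ does not create spurious coincidences or miss a vertex at the extreme degrees (a vertex of least degree, never bad; a vertex of highest degree, never dull). I would handle this by treating $v$ separately throughout --- it appears in both $N[v]$ and $\bar{N}[v]$ --- and by checking that the degree-reindexing $d \mapsto n-1-d$ is a bijection on the degree multiset that reverses the ``differ by one'' adjacency, so no information is lost. Once the per-vertex equivalence is secured, the lemma follows with no further work.
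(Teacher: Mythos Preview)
Your proposal is correct and follows essentially the same route as the paper: both arguments invoke Theorem~\ref{dsc}, use the observation that bad and dull swap under complementation while neighbours and non-neighbours swap, and deduce that $v$ is ds-completable in $G$ iff it is ds-completable in $\bar{G}$. The paper phrases the condition via bad neighbours and their corresponding dull vertices, whereas you phrase it via the inclusion $D_v \subset N[v]$, but these are the same argument.
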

{\bf Corollary} A degree sequence is forcing iff the complement degree sequence is
forcing, where for any degree sequence $[ d_i ], \, i=1 \cdots n$, the complement degree
sequence is defined to be $[ n-1-d_n,\, n-1-d_{n-1},\, \cdots, n-1-d_1 ]$.

\begin{proof} By definition, a graph $G$ is ds-reconstructible iff it
contains a ds-completable vertex. Now, for any $G$,
a vertex which is bad in $G$ is dull in $\bar{G}$ and a vertex which is dull in $G$
is bad in $\bar{G}$. From lemma (\ref{dsc}) we have that a vertex $v$ is ds-completable
iff for any bad vertex $w$ adjacent to $v$, all the corresponding dull vertices
(i.e. those of degree $d_w-1$) are also either adjacent to $v$ or equal to $v$. 
It follows that, in the complement
graph $\bar{G}$, $v$ has the property that for any dull vertex $w$ not adjacent to $v$, all the corresponding
bad vertices (i.e. those of degree $d_w+1$) are not adjacent to $v$ either. It follows that $v$
satisfies the condition of lemma (\ref{dsc}) in $\bar{G}$. \qed  \end{proof} 

To prove the corollary, it suffices to observe that for every graph with a given degree 
sequence, the complement graph has the complement degree sequence.  \qed

This lemma serves to show that if we wish to discover whether
a graph is ds-reconstructible, 
then it is sufficient to determine the status of either one of $G$ or $\bar{G}$. 
The same applies to the task of determining whether a given degree sequence
is ds-reconstruction-forcing.

\begin{definition}
	A {\em stub} is half of an edge, such that each stub attaches to one vertex, 
	and each edge is the result of joining two stubs.
\end{definition}

We will also say that an edge or a stub is `bad' when it is attached to a bad vertex.

\begin{definition}
	A vertex set is {\em neighbourly} if no vertex outside the set has a degree one
	less than that of a vertex in the set. Thus, $K$ is neighbourly iff 
	$\{d_u : u \notin K\} \cap \{d_u-1 : u \in K\} = \emptyset$. A vertex set
	is {\em $d$-neighbourly} if it contains a vertex $v$ of degree $d$ such that no vertex outside the
      set has a degree one less than that of any vertex other than $v$ in the set.
      Thus, $K$ is $d$-neighbourly iff 
	$\{d_u : u \notin K\} \cap \{d_w-1 : w \in K - v\} = \emptyset$ and $d_v = d$.
       For example, a set of four vertices with degrees $[3,4,4,5]$ is 3-neighbourly if there is 
       no other vertex of degree 3 or 4, whether or not there is a vertex of degree 2.
\end{definition}

The significance of this concept is that if a vertex $v$ is adjacent to {\em all} the 
members of a neighbourly set, then those neighbours can be deemed `good' as far as 
$v$ is concerned. Indeed, it is obvious from the definitions that 
a vertex $v$ is ds-completable iff $N[v]$ is the union of one or more neighbourly
or $d_v$-neighbourly sets.

Any set that contains all the dull vertices is neighbourly. Any set $j$ consisting of
all the vertices of degrees in some range $d_{\rm min}(j) \cdots d_{\rm max}(j)$ is
neighbourly iff $d_{\rm min}(j)$ is good.

\begin{theorem}  \label{th.allv}
	A graph has all of its vertices ds-completable if and only if no pair of degrees differ by 1.	
\end{theorem}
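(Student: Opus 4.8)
The plan is to apply Mulla's characterization (Theorem \ref{dsc}) one vertex at a time, proving the two implications separately and working entirely within the fixed graph $G$. Throughout I will use the reformulation that a vertex $v$ is ds-completable iff $D_v \subseteq N[v]$, where $D_v$ is the set of vertices whose degree is one less than that of some neighbour of $v$. The statement is a claim about the individual graph $G$, so at no point do I pass to other realizations of the degree sequence.

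For the direction that assumes no pair of degrees differs by $1$, I would show that $D_v$ is empty for every vertex $v$. If $w$ were a neighbour of $v$ of degree $d_w$, then any vertex of degree $d_w - 1$ would be a pair of degrees differing by $1$, which the hypothesis forbids; hence no such vertex exists and $D_v = \emptyset \subseteq N[v]$. By Theorem \ref{dsc} every $v$ is ds-completable, so this direction requires no further work.

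For the converse I would argue the contrapositive: assuming that some pair of degrees differs by $1$, I exhibit a single vertex that fails to be ds-completable. Fix vertices $u$ and $w$ of degrees $d$ and $d+1$ respectively. The goal is to produce a vertex $v$ adjacent to $w$, non-adjacent to $u$, and distinct from $u$; such a $v$ then has the neighbour $w$ of degree $d+1$ together with the non-neighbour $u$ of degree $d = (d+1)-1$, so Theorem \ref{dsc} shows $v$ is not ds-completable. Concretely I claim that $N(w) \not\subseteq N[u]$, and any $v \in N(w) \setminus N[u]$ serves (it is automatically distinct from $u$ and from $w$).

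The main obstacle is that $N(w)$ and $N[u]$ both have cardinality $d+1$, so a crude counting argument does not settle matters directly: a priori the two sets could coincide. The resolution is to rule out $N(w) = N[u]$ using the interplay of adjacency with the absence of loops. If the sets were equal, then $u \in N[u]$ would force $u \in N(w)$, i.e. $u \sim w$, while $w \notin N(w)$ would force $w \notin N[u]$, i.e. $w \not\sim u$, contradicting the symmetry of the adjacency relation. Since the two sets have equal finite size and cannot be equal, $N(w) \not\subseteq N[u]$, and the desired $v$ exists. I expect this brief loop-and-symmetry observation to be the only genuinely non-routine step; everything else follows immediately from Mulla's theorem and the definitions of bad and dull degrees.
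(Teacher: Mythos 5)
Your proof is correct, and the \emph{only if} direction takes a genuinely different route from the paper's. The paper argues by a global stub-counting contradiction: fixing a bad vertex $v$ of degree $d$ with $a$ edges into the set $\alpha$ of vertices of degree $d-1$, it tallies the edges that completability of \emph{every} vertex would force onto $\alpha$ (each non-$\alpha$ neighbour of $v$ adjacent to all of $\alpha$, near-completeness among the members of $\alpha$ adjacent to $v$, plus the $a$ edges to $v$ itself), arriving at the requirement $(d-1)|\alpha| \ge |\alpha|(d+a) - a^2$, i.e. $|\alpha| \le a^2/(a+1) < a \le |\alpha|$, a contradiction. You instead give a purely local, two-vertex argument: taking $u$ of degree $d$ and $w$ of degree $d+1$, you note $|N(w)| = |N[u]| = d+1$, rule out $N(w) = N[u]$ by irreflexivity plus symmetry of adjacency ($u \in N(w)$ would force $u \sim w$ while $w \notin N(w)$ forces $w \not\sim u$), and conclude $N(w) \setminus N[u] \ne \emptyset$; any $v$ there has the neighbour $w$ of degree $d+1$ and the distinct non-neighbour $u$ of degree $d$, so it fails Mulla's criterion (Theorem \ref{dsc}), which is legitimately available at this point in the paper. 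Your edge cases are sound: $v \ne u$ since $u \in N[u]$, and $v \ne w$ since $w \notin N(w)$. What each approach buys: yours is shorter, more elementary, and constructive, pinpointing a non-completable vertex inside the neighbourhood of any dull vertex; the paper's counting argument is longer but serves as a prototype for the $\sigma$/$\tau$ stub-budget machinery that dominates the rest of the paper, and it quantifies how badly the degree budget of $\alpha$ is exceeded rather than merely exhibiting one failing vertex. The \emph{if} direction is essentially identical in both treatments.
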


\begin{proof}
({\em If}):	
If no pair of degrees differ by 1, then all vertices are good, so no vertex has a bad neighbour,
therefore every vertex is ds-completable. \qed\\
({\em Only if}): We will show that if there is a bad vertex, then there is a non-completable vertex.
Let $v$ be a bad vertex which has degree $d$. Let $\alpha$ be the set of vertices of degree $d-1$.
Consider the edges of $v$. Suppose $a$ of these edges go to vertices in $\alpha$. Then $(d-a)$ of them
go to vertices not in $\alpha$.	
In order that all vertices be completable, we require 
in particular that each of the $(d-a)$ neighbours of $v$ that are not themselves in $\alpha$ must be adjacent to all of $\alpha$. Therefore edges between this group and $\alpha$ will use up
\be
(d-a) |\alpha|   \label{alphastub}
\ee 
of the stubs on vertices in $\alpha$. Next consider the $a$ vertices in $\alpha$ that are adjacent to $v$.
In order that they should be completable, we require that each is adjacent to all the other
vertices in $\alpha$. This requires $a(|\alpha|-a)$ edges between these $a$ and the
others in $\alpha$, plus a further $a(a-1)/2$ edges among the $a$ vertices. Hence the number of
stubs on vertices in $\alpha$ used up by the required edges among the vertices in $\alpha$ is
\be
2 a(|\alpha| - a) + a(a-1) .
\ee
Adding these to the number given by (\ref{alphastub}), and including also the $a$ edges
between $\alpha$ and $v$, we find that the number of stubs on vertices in $\alpha$ has to be
greater than or equal to
\be
(d-a) |\alpha| + 2 a(|\alpha| - a) + a(a-1) + a = |\alpha|(d+a) - a^2.
\ee
However, the total number of stubs on vertices in $\alpha$ is $(d-1)|\alpha|$. In order
that this should equal or exceed the number we just found to be required, one needs
$|\alpha| \le a^2/(a+1)$, i.e. $|\alpha| < a$, but this is clearly impossible. Hence
we have a contradiction, and it follows that at least one of the vertices is not completable. \qed
\end{proof}

The graphs having all good vertices form a type of error-correcting code, in the following sense.
If one party is going to send graphs to another, and it has been agreed beforehand that only
graphs of a given degree sequence will be sent, then if all degrees are good the receiver can determine
perfectly, from the received graph, which graph was sent, even when the communication channel
has a noise process whose effect is, with some probability, to delete one vertex
and its edges. (It is not my
intention to suggest that this is a physically realistic scenario, only to point out the mathematical
fact.) More generally, the degree sequence would not need to be agreed in full between the
communicating parties. If would suffice,
for example, that throughout the whole set of degree sequences which
are allowed, no degree differs from another by one.

The rest of the paper is concerned with trying to discover, by examining the degree
sequence $\pi$, whether or not it so constrains the graph that a completable vertex must
be present. For each $\pi$ this is like solving a logic puzzle, somewhat reminiscent of
the popular {\em Sudoku} puzzle. The degree sequence provides a `rule'
on how many edges each vertex has, and we have to fit these edges into the graph without
breaking the rule `no vertex is allowed to be completable'. If such a puzzle has
no solution, then $\pi$ is ds-reconstruction-forcing. One can enjoy this game
as a logical exercise in its own right, without regard to Kelly and Ulam's reconstruction
conjecture. The fact that we can write down such preliminary general observations as
lemmas \ref{lemswtich} and \ref{lem.comp} and theorems \ref{dsc} and \ref{th.allv} suggests
that we have a reasonably well-defined area of mathematics to explore. 
However, it will emerge, in the subsequent sections, that the puzzle is a hard one!

\section{Easily recognised forcing sequences}

For each result to be presented, we shall give an example degree sequence
to which the result applies. For this purpose we adopt a notation
where we simply list the degrees, as in, for example, $[[1222]33\,56]$
(only single-digit examples will be needed), with square brackets
or bold font used to draw attention to vertices featuring in the argument.

\begin{lemma} \label{obv}
A degree sequence having one or more of the following properties is forcing:\\
\begin{enumerate}
\item There is no or only one bad vertex $($e.g. $[3333334])$.
\item There is no or only one dull vertex  $($e.g. $[2333555])$.
\item There is a single vertex which is both bad and dull, and there are 2 bad degrees
$($e.g. $[1111233])$.
\item The sum of the degrees of bad vertices is less than $|V| + ( |B| \mbox{ \rm mod } 2)$, 
where $B$ is the set of bad vertices.
$($e.g. $[1111[223]5])$.
\end{enumerate}
\end{lemma}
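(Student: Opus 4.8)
The plan is to prove each clause by producing, for every realization, a ds-completable vertex (clauses 1--3) or by deriving a counting contradiction from the assumption that none exists (clause 4). Throughout I would lean on the remark preceding Theorem \ref{dsc}, that a vertex with no bad neighbour is automatically ds-completable, together with the reformulation $D_v \subseteq N[v]$. Clause 2 I would dispatch immediately from clause 1 by complementation: bad and dull vertices interchange under $G \mapsto \bar{G}$, and ``forcing'' is complement-invariant (the corollary to Lemma \ref{lem.comp}), so ``at most one dull vertex'' in $G$ becomes ``at most one bad vertex'' in $\bar{G}$.

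For clause 1, let $w$ be the sole bad vertex (if there is none, every vertex is good and hence ds-completable). If $w$ is not full, choose any $v \neq w$ with $v \notin N(w)$; its only conceivable bad neighbour is $w$, to which it is not joined, so $v$ has no bad neighbour and is ds-completable. If instead $w$ is full, then $N[w] = V \supseteq D_w$, so $w$ itself is ds-completable (the full-vertex case of Lemma \ref{simple}). Either way $G$ is ds-reconstructible.

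For clause 3, let $x$ be the unique both-bad-and-dull vertex, of degree $d$; uniqueness makes $x$ the only vertex of degree $d$. Since $x$ is bad and dull, degrees $d-1$ and $d+1$ both occur, whence $d+1$ is itself a bad degree; as there are exactly two bad degrees they must be $d$ and $d+1$. A bad neighbour of $x$ thus has degree $d$ or $d+1$, and it cannot have degree $d$ (that is $x$, which is not its own neighbour), so it has degree $d+1$; the requirement of Theorem \ref{dsc} for such a neighbour is that all degree-$d$ vertices lie in $N[x]$, which holds trivially because $x$ is the only one and $x \in N[x]$. Hence $x$ is ds-completable in every realization.

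Clause 4 is the substantial case, and I expect the counting to be the main obstacle, since here one cannot simply point to a single good vertex but must argue in the quantitative style of the ``only if'' direction of Theorem \ref{th.allv}. I would argue by contradiction: suppose a realization $G$ has no ds-completable vertex. Then every vertex carries at least one bad neighbour. Let $S = \sum_{u \in B} d_u$ be the number of stubs on the bad set $B$, and split the incident edges into $e_{bb}$ edges inside $B$ and $e_{bg}$ bad--good edges, so $S = 2e_{bb} + e_{bg}$. Each of the $|V|-|B|$ good vertices needs its own edge into $B$, giving $e_{bg} \ge |V|-|B|$; and since each bad vertex also needs a bad neighbour, the graph induced on $B$ has no isolated vertex and hence $e_{bb} \ge \lceil |B|/2\rceil$. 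Therefore
\[ S = 2e_{bb} + e_{bg} \ge 2\lceil |B|/2\rceil + (|V|-|B|) = |V| + (|B| \bmod 2), \]
contradicting the hypothesis $S < |V| + (|B| \bmod 2)$. So no such $G$ exists and the sequence is forcing.
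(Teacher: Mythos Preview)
Your proof is correct. For clause~4 your counting argument coincides with the paper's. For clauses~1--3 you take a different but equally valid route: the paper simply observes that each of these is an instance of case~4 of Lemma~\ref{simple} (the special vertex---the unique bad vertex, the unique dull vertex, or the unique bad-and-dull vertex---can be removed from the degree sequence to leave no consecutive pair, so \emph{that} vertex's card is ds-completable). You instead argue each clause directly: for clause~1 you exhibit a non-neighbour of the bad vertex (or the bad vertex itself when full); for clause~2 you reduce to clause~1 via complementation and Lemma~\ref{lem.comp}; for clause~3 you verify the criterion of Theorem~\ref{dsc} at the bad-and-dull vertex $x$. The paper's treatment is more uniform (one appeal to Lemma~\ref{simple} handles all three), while yours is more self-contained and makes explicit use of the complement symmetry that the paper has available but does not invoke here. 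Both approaches are elementary and of comparable length.
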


\begin{proof} 
Parts 1--3 are merely ways of expressing examples of
case 4 of lemma \ref{simple}. Part 4
is owing to the fact that if there are too few bad edges then there must be a vertex with no bad
neighbour. Let us define $n = |V|$ and $\delta_B \equiv |B| \mbox{  mod } 2$.
The least number of stubs on bad vertices 
required in order that every vertex can have at least one bad neighbour is
$n + \delta_B$ because we require at least $(n-|B|)$ edges
extending between $B$ and good vertices, and at least $\lceil |B|/2 \rceil$ edges
extending between bad vertices. The former use up one bad stub each, the latter use up two bad
stubs each. \qed  \end{proof}

\begin{lemma}  \label{BD}
If either of the following conditions hold:
\begin{enumerate}
\item there are exactly two bad degrees, and there is a dull vertex $v$ of unique degree, and a bad vertex $u$ of unique degree,
with $d_u \ne d_v + 1$, 
$($e.g. $[{\bf 2}3333{\bf 4}])$
\item there are exactly three bad degrees, and there are two vertices $u$, $v$, each both bad and dull and of unique degree,
$($e.g. $[22{\bf 34}555])$
\end{enumerate}
then either $\Gv$ is ds-completable or $G\! - \! u$ is ds-completable.
\end{lemma}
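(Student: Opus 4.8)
The plan is to translate ``ds-completable'' into a neighbourhood condition via Mulla's criterion (Theorem~\ref{dsc}) and then argue by contradiction: if \emph{neither} $\Gv$ nor $G\!-\!u$ were ds-completable, the single pair $\{u,v\}$ would be forced to be adjacent and non-adjacent at the same time. I will repeatedly use the form of Theorem~\ref{dsc} that says $\Gv$ is ds-completable iff for every bad neighbour $w$ of $v$ all vertices of degree $d_w-1$ lie in $N[v]$, together with the fact that only the bad degrees can create such an obstruction.

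For the first condition, the initial step is to pin down the bad degrees exactly. Because $v$ is dull there is a vertex of degree $d_v+1$, so $d_v+1$ is bad; because $u$ is bad, $d_u$ is bad; since $d_u\neq d_v+1$ and there are exactly two bad degrees, these are precisely $d_v+1$ and $d_u$. I would then treat the two cards in turn. In $\Gv$, a bad neighbour of degree $d_v+1$ only requires the vertices of degree $d_v$ to lie in $N[v]$, and by uniqueness this set is $\{v\}$, so it is met automatically; the sole remaining possibility is a neighbour of degree $d_u$, which by uniqueness can only be $u$. Hence $\Gv$ can fail to be ds-completable only when $v$ is adjacent to $u$. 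In $G\!-\!u$, uniqueness of $d_u$ means $u$ has no neighbour of degree $d_u$, so its only bad neighbours have degree $d_v+1$, and any such neighbour requires the unique degree-$d_v$ vertex $v$ to lie in $N[u]$; hence $G\!-\!u$ can fail only when $u$ is \emph{not} adjacent to $v$. Assuming both cards fail then gives $v\sim u$ and $v\not\sim u$ simultaneously, the desired contradiction.

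For the second condition the same template works once I establish that the two unique bad-and-dull vertices have consecutive degrees. Each of $u,v$ forces its own degree and one more to be bad (a bad-and-dull vertex of degree $d$ makes both $d$ and $d+1$ bad), so $d_v,\,d_v+1,\,d_u,\,d_u+1$ are all bad; with exactly three bad degrees and $d_u\neq d_v$ this is only possible if $|d_u-d_v|=1$. Relabelling so that $d_u=d_v+1$, the bad degrees are exactly $d_v,\,d_v+1,\,d_v+2$. Let $\alpha$ be the vertices of degree $d_v-1$ and $\gamma$ the vertices of degree $d_v+2$, both nonempty since $v$ is bad and $u$ is dull. Applying the criterion to $\Gv$: a neighbour of degree $d_v+1$ can only be $u$ and merely forces $v\in N[v]$, while a neighbour in $\gamma$ forces the unique degree-$(d_v+1)$ vertex $u$ into $N[v]$; hence $\Gv$ can fail only when $v$ has a neighbour in $\gamma$ but is not adjacent to $u$. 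Applying it to $G\!-\!u$: a neighbour in $\gamma$ only forces $u\in N[u]$, while a neighbour of degree $d_v$ (necessarily $v$) forces all of $\alpha$ into $N[u]$; hence $G\!-\!u$ can fail only when $u$ is adjacent to $v$. Assuming both cards fail again forces $v\not\sim u$ and $u\sim v$ together, a contradiction.

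I expect the main work to be the bookkeeping rather than any single estimate. The delicate step in each case is verifying that the bad degrees are exactly the ones claimed --- this is precisely where the hypotheses ``exactly two (resp.\ three) bad degrees'', the uniqueness of the relevant degrees, and (in the first case) $d_u\neq d_v+1$ are all consumed --- and then checking, neighbour-type by neighbour-type, that every bad neighbour either imposes a constraint that collapses to a trivially satisfied singleton or else pins down the adjacency of $u$ and $v$. Once that is done, the crux is the simple but decisive observation that the obstruction to completing one card demands $u\sim v$ whereas the obstruction to completing the other demands $u\not\sim v$, so the two cards cannot both fail.
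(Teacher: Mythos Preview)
Your proof is correct and follows essentially the same approach as the paper: case on whether the edge $uv$ is present, and apply Mulla's criterion (Theorem~\ref{dsc}) to show that one of the two cards must be ds-completable. The paper's proof is extremely terse (three lines, deferring elaboration to Lemma~\ref{unique}), and in fact states the implications with $u$ and $v$ interchanged relative to the correct direction you derive; your detailed case analysis---identifying the bad degrees exactly and checking each possible bad-neighbour type---is the right way to fill in the argument.
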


\begin{proof} 
Choose the labels $u,v$ such that $d_u \ne d_v + 1$.
If there is an edge $uv$ then $\Gv$ is completable.
If there is no edge $uv$ then $G-u$ is completable. (For further elaboration, see lemma \ref{unique}.)
 \qed  \end{proof}

\section{Application to graph generation}  \label{s.generate}

As added motivation for the pursuing the subject further, we present in
this section an application.

The concept of `completing' a card by adding a vertex to it arises in the
area of graph generation. For example, one may wish to generate a set of
graphs of given degree sequence. A typical procedure is to generate smaller graphs
and extend them, and then eliminate duplicates by isomorphism checking. If one
first observes that the target degree sequence is forcing, then one knows that, for
each graph one is seeking, there exists a subgraph which is uniquely completable.
Therefore, in the graph generation algorithm, one may bypass those subgraphs
which are not uniquely completable (a property which is easy to check), in confidence
that a uniquely completable subgraph will eventually turn up, and then a graph
can be generated from that subgraph. In this way one obtains a
reduction in the amount of generation of isomorphic duplicates,
for only a modest computational effort.

Here is an example. Suppose we wish to generate all graphs having the degree
sequence $[2333555556]$ (there are 4930 such graphs). First we note, from lemma \ref{BD}
part 1, that the sequence is forcing. After deleting the vertex of degree 2,
there are 5 possibilities for the degree sequence of a graph on 9 vertices
which can be used as a template to generate the graphs we want. These are
$[223555556]$, $[233455556]$, $[333445556]$, $[233555555]$, $[33455555]$.
If we were to start by creating all graphs realising these degree sequences
and completing them by adding a vertex
in all ways that give the target degree sequence, then we would
generate $(119 + 1068 + 1810 + 6 \times 96 + 5 \times 351) = 5328$ graphs.
Isomorphism checking would then be required to reduce this set. If instead
we use the ds-reconstruction-forcing property, then we proceed as follows.
Let $u$ be the vertex of degree 2, and $v$ be the vertex of degree 6.
First, we note that it is the cases where $u$ and $v$ are
adjacent that give a non-uniquely-completable card after $u$
is deleted. Equally, it is precisely those cases where deletion of $v$
will yield a completable card. Therefore the set of degree sequences
to be considered is those where $u$ is removed if $u$ and $v$ are non-neighbours,
and where $v$ is removed otherwise. This yields the sequences
$[223555556]$, $[233455556]$, $[333445556]$ as before, and also the sequences
$[122244555]$, $[122344455]$, $[123344445]$, $[133344444]$. Now we consider
all graphs of order 9 that realise one of these sequences. In every case,
each such graph will yield one and only one graph of order 10 having the target
degree sequence, and no two of these will be isomorphic because they each have
a different degree neighbourhood of one or both of $u$, $v$.
The total number of graphs generated is now $(119+1068+1810+88+684+963+198)
= 4930$ and no isomorphism checking is required on the set of graphs of order 
10 that has thus been generated.

This does not rule out that isomorphism
checking may be required to generate the graphs required on 9 vertices, but
we note that one of the order-9 sequences under
consideration is itself forcing.

\section{Recognising forcing sequences more generally}

Consider now the degree sequence [112223 5666]. 
In order to prove that this sequence
is forcing, we shall suppose that it is not, with a view to obtaining a contradiction. 
Label the vertices by their degree, and suppose no vertex is completable.
Gather the vertices of degree 5 or 6 into a set called $j$, and the other vertices
into a set called $i$. Each vertex of degree 6 must have at least 3 edges to vertices
in $i$, and each vertex of degree 5 must have at least 2 edges to vertices in $i$,
so there are at least 11 edges from $j$ to $i$. But this is equal to the number of stubs
in $i$, so we deduce that all edges from vertices in $i$ go to $j$. The 
pendants cannot be adjacent to 5, because 5 is good, so 5 is adjacent to at least one 2.
But the other edge from that 2 must go to a vertex in $j$, so it must go to a 6.
Therefore that 2 has a neighbourly neighbourhood, which contradicts the assumption
that no vertex is completable. It follows that there must be a completable vertex.

In order to formulate such arguments more generally, 
divide the vertices into distinct sets, such that 
all vertices of a given degree
are members of the same set, but it is allowed that a set may contain
vertices of more than one degree. The vertex sets will be labelled by indices
$i,j,k,c,g,h,p,q,s,t$. We shall choose these labels such that a set with label $i$ will 
typically have vertices of low degree and a set with label $j$ or $h$
will typically have vertices of high degree. Throughout the rest of the paper,
the following conventions will be observed when choosing labels for sets of vertices:
\begin{enumerate}
\item $V$ is the set of all vertices in the graph,
$B$ is the set of bad vertices, $D$ is the set of dull vertices. 
\item For any set $i$, define $\bar{i} \equiv V \setminus i$. Thus the set of good vertices is $\bar{B}$.
\item Sets labelled $i$ and $j$ are distinct.
\item $c = i \cap B,\; g = i \setminus c,\; p = \bar{i} \cap D,\; q = \bar{i} \setminus p.$
Thus $g,c,p,q$ are all distinct and together they account for all the vertices in $V$. Note that, by
definition, $g$ is good, $c$ is bad, $p$ is dull, and $q$ has no dull vertices.
\item The label $k$ is only applied to sets which are bad by definition;
the label $h$ is only applied to sets which by definition include the vertex or vertices
of highest degree.
\item $u,v,w$ refer to single vertices.
\item It will not be necessary to distinguish between a single vertex and the set whose
only member is that vertex.
\end{enumerate}

Let $\epsilon(j,k)$ be the number stubs in $j$ that are involved in edges between
$j$ and $k$. When $j$ and $k$ are distinct, this is equal to the number of edges between
$j$ and $k$. When $j=k$ it is equal to twice the number of edges within $j$, and more
generally $\epsilon(j,k) = \epsilon(j, k \setminus j) + \epsilon(j, k \cap j)$.
Define $\sigma(i,j)$ ($\tau(i,j)$) to be a lower (upper) bound on $\epsilon(i,j)$, i.e.
\be
\sigma(i,j) \le \epsilon(i,j) \le \tau(i,j) .
\ee
The bound $\sigma$ ($\tau$) will be raised (lowered) as we take into
account more and more information about the graph, based on
its degree sequence, under the assumption that it is a
graph and has no completable vertex. If eventually it is
found that $\tau(i,j) < \sigma(i,j)$ for some $i,j$ then
the conditions cannot be met: it amounts to a contradiction.
In this case we deduce that there must be a completable vertex
and therefore the degree sequence is forcing.

For any vertex-set $i$, define $n_i = |i|$, define
$\kappa(i,d)$ to be the number of members having degree $d$,
and $m_i$ the sum of the degrees:
\be
n_i &=& |i|  \\
\kappa(i,d) &=& \left| i \cap \left\{v : d_v = d \right\}  \right| \\
m_i &=& \sum_{v \in i} d_v
\ee
$m_i$ is the total number of stubs on vertices in $i$. It will also be useful to define
\be
\xi_i &=& n-1-n_i  , \\
\chi_{ij} &=& m_j - (n-1-n_i) n_j .
\ee
For any vertex not in $i$, $\xi_i$ is the number of other vertices also not in $i$.

The following conditions follow from the definition of a graph; 
they must hold for any graph whether or not there is
a completable card, for distinct $i,j,k$:
\be
\epsilon(i,\,i \cup \ibar) &=& m_i    \label{epsitot}  \\
\sigma(i,j) &\ge& 0  \label{sigpositive}  \\
\tau(i,j)     &\le& \min \{ n_i n_j,\; m_i,\; m_j \}   \label{maxstubs} \\
\epsilon(i,j) &\ge& \sigma(j,i) \\
\epsilon(i,j)     &\le& \tau(j,i) \\
\sigma(j,k) &\ge& \sigma(i \cup j, k) - \tau(i,k)      \label{sigmtau} \\
\tau(j,k)     &\le& \tau(i \cup j, k) - \sigma(i,k)   \label{taumsig} \\
\sigma(i,j)  &\ge & \chi_{ij}   \label{minedge}  \\
\sum_i \sigma(i,j) &\le& m_j \,\le\, \sum_i \tau(i,j)    \label{sumeps}
\ee
where in (\ref{sumeps}) each sum indicates a sum over distinct sets that together contain all
the vertices in the graph. Most of the above are obvious. To prove (\ref{minedge}),
note that since a vertex of degree $d$ is non-adjacent to $n-1-d$ vertices, it can only be
non-adjacent to at most this number from any set of other vertices. Therefore
a vertex of degree $d$ is adjacent to at least $n_i-(n-1-d)$ members of any vertex set of 
size $n_i$ (and this is true whether or not $n_i > n-1-d$). 
The result follows by summing this over all members of $j$:
\[
\sum_{v \in j} \left( n_i + d_v + 1 - n \right) = m_j - n_j \left( n - n_i - 1 \right).
\]

The following also hold for any set of vertices in any graph:
\be
\epsilon(i,i)  &\le& \tau(i,i) - (\tau(i,i) \mbox{ mod } 2) ,        \label{oddtau} \\
\epsilon(i,\bar{i})  &\ge& \sigma(i,\bar{i}) + ( (m_i - \sigma(i,\bar{i})) \mbox{ mod } 2) . 
\label{oddsig} 
\ee
These both follow from the fact that the number of stubs in a graph is always even. In
the first case, if we have established an upper bound $\tau(i,i)$ which is odd, then we know
the true number is at most one less than this. In the second case, 
$(m_i - \sigma(i,\bar{i}))$ is an upper bound on the number of stubs 
remaining in $i$ after all edges out of $i$ are accounted for. If this is odd
then the true value is at most one less than this, and therefore $\epsilon(i,\bar{i})$
is at least one more than $\sigma(i,\bar{i})$. These conditions are useful
for tightening constraints on $\epsilon$.

Define the following functions from vertex sets to binary values:
\be
\delta_i^j &:=& \left\{ \begin{array}{ll} 
	1 & \mbox{ if } i \cap j \ne \emptyset \\
	0 & \mbox{ if } i \cap j = \emptyset \end{array}  \right. \\
\nu_i &:=& \left\{ \begin{array}{ll} 
	1 & \mbox{ if $i$ is neighbourly}  \\
	0 & \mbox{ otherwise } \end{array}  \right.    \\
\mu_i &:=& \left\{ \begin{array}{ll} 
	1 & \mbox{ if $\ibar$ is neighbourly}  \\
	0 & \mbox{ otherwise } \end{array}  \right.  
\ee

\subsection{General conditions}

If a graph has no completable vertex, then 
\be
\epsilon(i,j) &\le& m_i - (1 - \delta_j^B) n_i ,           \label{jgood} \\
\epsilon(i,j) &\le& m_i - \nu_j \kappa(i, n_j).            \label{jneighbourly}  
\ee
{\em Proof.} If $\delta_j^B = 1$ then (\ref{jgood}) follows from (\ref{maxstubs}); if $j$ is good (i.e.
$\delta_j^B = 0$) then no vertex in $i$ can have all its edges to $j$, or that vertex would
be completable, hence $\epsilon(i,j)$ must leave at least one stub unused on each vertex in $i$,
and (\ref{jgood}) follows. If $j$ is neighbourly, then after edges from $j$ to $i$ are accounted for
there must be an unused stub for each vertex in $i$ of degree $n_j$, which gives (\ref{jneighbourly}). \qed

Define the notation $\left[ x \right]^+$, where $x$ is a number, to mean
\be
\left[ x \right]^+ \equiv \max(0, x).
\ee
If there is no completable vertex then
\be
\lefteqn{\epsilon(i,j) \,\ge\, \chi_{ij}}   \nonumber  \\
&& + \left(1-\delta_i^D\right) \left[n_j + \left[ n_j n_s - \tau(s,j) \right]^+\right]   \nonumber \\
&& + \delta_i^D \mu_i \left[ \kappa\left(j, \xi_i \right)
  + \left[ \kappa(j, n-n_i) - \tau(c,j) \right]^+ \right]  \nonumber \\
 \label{xminedge} 
\ee

where $s = V \setminus (i \cup j \cup D)$ and $c = i \cap B$.
{\em Proof.}
This is eqn (\ref{minedge}), modified so as to include conditions that no vertex is completable.
First suppose $\delta_i^D = 0$. One has $\epsilon(i,j) = \chi_{ij}$ only on the assumption that each
vertex in $j$ has a closed neighbourhood containing all of $\bar{i}$, but if $\delta_i^D = 0$ then
$D \subset \bar{i}$ so this will result in all vertices in $j$ being ds-completable. To avoid this,
we must remove one edge from each vertex in $j$ to a dull vertex, and let that edge go to $i$ instead.
This accounts for the $n_j$ term in the first square bracket. We then note that we are still assuming
that every vertex in $j$ has an edge to every vertex in $s$ (hence $n_j n_s$ edges), but if this is more
than are possible (i.e. larger than $\tau(s,j)$) then we must correct for this, hence the second term
in the first square bracket. 

Next suppose $\delta_i^D = 1$. 
In this case we consider the vertices 
in $j$ whose degree is such that their closed 
neighbourhood must be $\bar{i}$ if they have no edge to $i$. But if $\bar{i}$ is neighbourly,
then this will not happen if no vertex is ds-completable, so such vertices must have at least
one edge to $i$. This accounts for the first term in the second square bracket in (\ref{xminedge}). 
Next, consider the vertices in $j$ of degree $n-n_i$. The calculation using $\chi_{ij}$ allows that
each of these sends just one edge to $i$. But if $\bar{i}$ is neighbourly, then that edge must go
to a bad vertex in $i$, and if $\tau(c,j)$ is small (for example, because $m_c$ is small) 
then this will not be possible for all the $\kappa(j,n-n_i)$ cases. The second term in the second
square bracket in (\ref{xminedge}) corrects for this.
\qed 

The combination of 
eqns (\ref{jgood})--(\ref{jneighbourly}) with (\ref{xminedge})
is sufficient to prove the forcing behaviour of many 
degree sequences. For example, $[2222][4455]$ is forcing by
(\ref{jgood}), (\ref{xminedge}) and
$[1222]33[56]$ is forcing by (\ref{jneighbourly}), (\ref{xminedge}).
For $n=8$ there are 293 degree sequences having no isolated or full vertex,
and of these, 195 are forcing. 
Eqns (\ref{jgood})--(\ref{xminedge}), together with lemma \ref{obv}, are sufficient 
to prove the forcing condition in all but 29 of the 195 forcing cases.  

In order to apply the constraints to any given degree sequence, one must make a judicious 
choice of how the vertices are partitioned into sets. One wants to choose a case with
high $m_j$ and low $m_i$, and if $j$ is good or neighbourly, then so much the better, since
this will tighten the bound set by (\ref{jgood}) or (\ref{jneighbourly}) or both.
Using (\ref{jgood}) and
(\ref{xminedge}), we have $m_i + n_j (\xi_i - 1 + \delta_i^D) \ge m_j +
\delta_i^D \mu_i \kappa(j,\xi_i)$
for a non-forcing sequence. For a given set $j$, the quantity $(m_i +
\xi_i n_j)$ decreases by $n_j - d_v$ for each vertex of degree $d_v$
added to set $i$. Therefore,
to obtain a tight constraint, one should include in $i$ all vertices with
$d_v < n_j$, in the first instance, and also vertices of degree $d_v = n_j$ if this 
makes $\delta_i^D \mu_i \kappa(j,\xi_i)$ increase. Further useful tests
can be obtained by considering the set $i \setminus D$ and/or $j \setminus B$.

For distinct sets $i,j$, if there is no completable vertex then
\be
\epsilon(i,j) \ge \chi_{ij} + \nu_j\left[ \kappa(s,n_j) + \kappa(j,n_j-1)\right]
\label{epsnuj}
\ee
where $s = V \setminus (i \cup j)$.
{\em Proof}. The argument is similar to that for eqn (\ref{xminedge}). $\chi_{ij}$ gives 
the minimum number of edges extending from $j$ to $i$ on the assumption
that each vertex in $j$ is adjacent to all of $\bar{i}$ except itself, and
on this assumption, any vertex in $s$ of degree $n_j$ will have all its edges
to $j$. But if $j$ is neighbourly then this is not allowed, so for each such member of $s$ one of the vertices in $j$ is not a neighbour, which implies a further edge to $i$. This gives rise to the $\kappa(s,n_j)$ term in
(\ref{epsnuj}). The $\kappa(j,n_j-1)$ is obtained in the same way, applying
the argument to members of $j$. \qed

In order to detect the forcing condition more generally, one must
examine the degree sequence in more detail. The rest of the paper presents
a series of constraints that are sufficient to identify all forcing sequences for
graphs on up to 10 vertices. I have not been able to discover any single simply stated
condition sufficient to distinguish all the forcing from the non-forcing sequences.
Instead there are a range of cases to consider, but some broad
themes emerge. One theme is that to avoid being
forcing a sequence must not result in vertices that `greedily' consume all the bad stubs, so that 
none are left for some other vertex. Another theme is that it is worth paying special attention 
to the case where there is a vertex whose degree is not shared by any other vertex.

Each constraint is introduced by
an example degree sequence whose forcing nature can be proved by the
constraint under consideration, but not by most (or in some cases any) of the other constraints.

\subsection{Conditions relating to bad stubs}

[2 444555 [777]].
For any good vertex set $j$, if the sequence is not forcing then
\be
m_B \ge \sigma(B,j) + n - n_j + \delta_B  .       \label{mBbound}
\ee
{\em Proof}. The edges from vertices in $j$ use up at least $\sigma(B,j)$ of the bad stubs.
There are $n-n_j$ vertices not in $j$, and these require at least one bad stub each. 
Therefore they require $n-n_j + \delta_B$ bad stubs. \qed

In order to apply this constraint, a good choice is to adopt for $j$ the set of good
vertices whose degree satisfies
\be
d > n - 1 - n_B - (1 - \delta_B^D).
\ee
Such vertices are liable to have more than one bad edge.

[222333 [666]7].
If the degree sequence is not forcing then, for any good set $j$,
\be
\sigma(B,j) \! &\ge& \sigma(g \cup B, j) - (m_g - n_g)  , \label{epsBj}    \\
\!\! \sigma(B,B) \! &\ge& 2 \sigma(c,k) + \left[ n_c - \sigma(c,k) \right]^+,  \label{epsBB}  \\
m_B &\! \ge& \! \sigma(B,j) + \sigma(B, B) + n \!- n_j\! - n_B  ,       \label{mBbound2}
\ee
where $c = i \cap B$, $g = i \setminus c$, and $k = \bar{i} \cap B$
for some $i$ distinct from $j$.
{\em Proof}. (\ref{epsBj}) follows from (\ref{sigmtau}) and (\ref{jgood}).
(\ref{epsBB}) expresses the fact that each edge between $c$ and $k$ uses two bad stubs, and any
vertex in $c$ not adjacent to a vertex in $k$ also requires a bad edge. 
(\ref{mBbound2}) is obtained by the same argument as for (\ref{mBbound}).  \qed

In order to use 
(\ref{epsBj})--(\ref{mBbound2}) one may use (\ref{xminedge}) or (\ref{epsnuj}) or another
method to obtain $\sigma(c,k)$ and $\sigma(g \cup B,j)$.

[11 334444 66]. For any degree sequence such that $D \cap B = \emptyset$, partition the vertices
into distinct sets $g,D,B,j$ where $m_g$ is small and $m_j$ is large. Then, if no
vertex is completable,
\be
\tau(D,j) &\le& \sum_{w \in j} \min(d_w, n_D-1) \le
n_j(n_D - 1),   \nonumber \\
\tau(g,j) &\le& m_g - n_g,    \nonumber \\
\tau(g,D) &\le& m_g - n_g,   \nonumber \\
\sigma(B,j) &\ge& m_j - n_j(n_j-1) - \tau(D,j) - \tau(g,j)   \nonumber \\
\sigma(D,B) &\ge& m_D - n_D(n_D-2) - \tau(D,j) - \tau(D,g),   \nonumber  \\
m_B &\ge& n_B+ \delta_B + \sigma(D,B) + \sigma(B,j) + n_g
\ee
{\em Proof}. The first constraint is the requirement that no vertex be adjacent to all the
dull vertices. The next two are examples of (\ref{jgood}), then we use (\ref{sigmtau})
twice (and require that no vertex in $D$ is adjacent to all the others in $D$) 
and finally require that all vertices have at least one bad edge. \qed

\begin{lemma} \label{taukk_lem}
For any vertex set $i$ such that $\bar{i}$ is neighbourly, let 
$p = \bar{i} \cap D$. If no vertex is completable, then
\be
\tau(p,p) \le n_p(n_p - 2) + \tau(i,p).  \label{taukk}
\ee
\end{lemma}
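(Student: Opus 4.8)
The plan is to convert the asserted upper bound on $\epsilon(p,p)$ into a per-vertex statement and then read it off from the non-completability hypothesis. Writing, for each $w \in p$, $a_w$ for the number of non-neighbours of $w$ inside $p$ (other than $w$ itself) and $b_w$ for the number of neighbours of $w$ lying in $i$, one has the bookkeeping identities $\epsilon(p,p) = n_p(n_p-1) - \sum_{w\in p} a_w$ and $\epsilon(i,p) = \sum_{w\in p} b_w$. Hence the claimed inequality $\tau(p,p) \le n_p(n_p-2) + \tau(i,p)$ is implied by $\epsilon(p,p) \le n_p(n_p-2) + \epsilon(i,p)$, which rearranges to exactly $\sum_{w\in p}(a_w + b_w) \ge n_p$. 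So it suffices to prove that $a_w + b_w \ge 1$ for every $w \in p$; that is, each dull vertex of $\bar{i}$ either misses some other vertex of $p$ or sends an edge into $i$.

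First I would argue by contradiction: suppose some $w \in p$ has $a_w = b_w = 0$, and I would show $w$ is completable, contradicting the assumption that no vertex is completable. The condition $b_w = 0$ places all neighbours of $w$ inside $\bar{i}$, and $a_w = 0$ makes $w$ adjacent to every other vertex of $p = \bar{i}\cap D$. I would then verify the criterion of Theorem \ref{dsc}, namely that no non-neighbour of $w$ (other than $w$) has a degree one less than a neighbour of $w$.

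The key step — and the one place where the hypothesis that $\bar{i}$ is neighbourly does the real work — is locating the ``shadow'' of each bad neighbour of $w$. Take any neighbour $y$ of $w$; since $b_w = 0$ we have $y \in \bar{i}$. If $y$ is bad, then every vertex $z$ of degree $d_y - 1$ is dull (witnessed by the degree-$d_y$ vertex $y$), and by neighbourliness of $\bar{i}$ no such $z$ can lie in $i$, so $z \in \bar{i}\cap D = p$. Because $a_w = 0$, either $z = w$ or $z$ is a neighbour of $w$, so in either case $z \in N[w]$. Thus every vertex whose degree is one less than that of a neighbour of $w$ lies in $N[w]$, which is precisely the condition of Theorem \ref{dsc}, and so $w$ is completable. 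This contradiction establishes $a_w + b_w \ge 1$ for each $w \in p$.

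Summing over the $n_p$ vertices of $p$ gives $\sum_{w\in p}(a_w+b_w)\ge n_p$, whence $\epsilon(p,p) - \epsilon(i,p) = n_p(n_p-1) - \sum_{w\in p}(a_w+b_w) \le n_p(n_p-1) - n_p = n_p(n_p-2)$; replacing $\epsilon(i,p)$ by its upper bound $\tau(i,p)$ yields $\epsilon(p,p) \le n_p(n_p-2) + \tau(i,p)$, so this quantity is a legitimate choice of $\tau(p,p)$. I expect the only genuinely delicate point to be the shadow-location step, since that is the single place where the neighbourliness of $\bar{i}$ is indispensable — it is what guarantees the degree-$(d_y-1)$ vertices land in $p$ rather than in $i$. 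Everything else is the counting identity linking $\epsilon(p,p)$, $\epsilon(i,p)$ and the per-vertex quantities $a_w,b_w$.
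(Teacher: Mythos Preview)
Your argument is correct and follows essentially the same route as the paper: the core observation in both is that a vertex $w\in p$ adjacent to all other members of $p$ must send an edge into $i$ or be completable (your contrapositive $a_w+b_w\ge 1$ is exactly the paper's ``a vertex in $p$ may only be adjacent to all other vertices in $p$ if it has an edge to $i$''). The paper then counts by saying at most $\tau(i,p)$ vertices can contribute $n_p-1$ stubs and the rest at most $n_p-2$, which is algebraically the same as your $\sum_{w}(a_w+b_w)\ge n_p$ followed by replacing $\epsilon(i,p)$ with $\tau(i,p)$; your explicit invocation of Theorem~\ref{dsc} to justify completability is more detailed than the paper's one-line appeal to neighbourliness, but the content is the same.
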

\begin{proof}
Observe that since $\bar{i}$ is neighbourly and $p$ is all the dull vertices in $\bar{i}$,
a vertex in $p$ may only be adjacent to all other vertices in $p$ if
it has an edge to $i$. Hence there are at most $\tau(i,p)$ vertices in
$p$ adjacent to $n_p - 1$ fellow members of $p$, and the remaining $(n_p - \tau(i,p))$ 
are each adjacent to at most $n_p -2$ members of $p$. The
total for $\tau(p,p)$ is therefore at most $\tau(i,p)(n_p-1) +
(n_p - \tau(i,p))(n_p-2) = n_p(n_p-2) + \tau(i,p)$. \qed
\end{proof}

[12 5556666 8].
If there are 5 degrees, alternating between good and bad (i.e. $D \cap B = \bar{D} \cap \bar{B} = \emptyset$), then label the five sets of vertices $g,c,p,k,h$ (in increasing degree order) and let $i = g \cup c$. 
These labels have been chosen in such a way that $p = \bar{i} \cap D$, so (\ref{taukk}) applies,
and $g,p,h$ are all good. If there is no completable vertex,
\be
\sigma(c,h) &\ge& \chi_{ih} - (m_g - n_g),  \nonumber  \\
\tau(i,p) &\le& (m_i - n_i) - \sigma(i,h), \nonumber  \\
\tau(p,p) &\le& n_p(n_p-2) + \tau(i,p) - (\tau(p,p) \mbox{ mod } 2), \nonumber  \\
\sigma(p,k) &\ge& m_p - (\tau(i,p)+\tau(p,p)+n_p n_h) ,\nonumber  \\
\sigma(c,k) &\ge& \sigma(p,k) - n_k(n_p - 1), \nonumber  \\
m_c &\ge& \sigma(c,h) + \sigma(c,k) .
\ee
{\em Proof}. The first inequality follows from (\ref{sigmtau}), (\ref{minedge}) and
(\ref{jgood}), noting that $h$ is good. The second inequality
follows from (\ref{taumsig}) and (\ref{jgood}), noting that $p$ and $h$ are both good. 
The equation for $\tau(p,p)$ is (\ref{taukk}) adjusted using (\ref{oddtau}). Next, the constraint on 
$\sigma(p,k)$ follows from (\ref{epsitot}) and (\ref{sigmtau}). 
The constraint on $\sigma(c,k)$ follows from the fact that any vertex
in $k$ that is adjacent to all of $p$ must have an edge to $c$. Finally,
we employ (\ref{sumeps}) and (\ref{sigpositive}) to constrain $m_c$. \qed

If we have a good set $j$ and a distinct set $i$, such that $\bar{i}$ is neighbourly, then if
no vertex is completable,
\be
m_B \ge n +\delta_B + \left(n_j - \tau(i,j) + \chi_{ij} \right) n_k
\ee
where $k = \bar{i} \cap B$. {\em Proof.} $(\tau(i,j) - \chi_{ij})$ is the maximum number of
vertices in $j$ that are not adjacent to all of $\bar{i}$, therefore $n_j - (\tau(i,j)-\chi_{ij})$
is the minimum number of vertices in $j$ that are adjacent to all of $\bar{i}$. If $\bar{i}$
is neighbourly, these vertices require a bad edge to $i$ in addition to the $n_k$ bad edges they
each have to $\bar{i}$. The result follows. \qed

\subsection{Conditions relating to unique vertices}

\begin{definition}
A {\em unique degree} is one which appears only once in the degree sequence.
For a vertex $v$ of unique degree $d_v$, define
\be
\alpha_v &\equiv& \{ w : d_w = d_v -1 \},   \nonumber \\
\beta_v  &\equiv& \{ w : d_w = d_v + 1 \}.  \label{alphabeta}
\ee
\end{definition}

\begin{lemma}  \label{unique}
{\bf Part A}.
If there is a vertex $v$ of unique degree $d$,
then if $v$ is not completable, 
it must not be adjacent to all the dull vertices not in $\alpha_v$
and it must be adjacent to a bad vertex not in $\beta_v$.
Thus, if $s \equiv (D-v) \setminus \alpha_v$ and $t \equiv (B-v) \setminus \beta_v$ 
then $\epsilon(v,s) < n_s$ and $\epsilon(v,t) \ge 1$, where $\alpha_v$, $\beta_v$ are
defined in (\ref{alphabeta}).

{\bf Part B}.
If there is a vertex $v$ of unique degree $d$ and $\epsilon(i,v)=0$ for a neighbourly set $i$,
then if $v$ is not completable, 
$\epsilon(v,\, s \cap \ibar) < |s \cap \ibar|$ and $\epsilon(v,\, t \cap \ibar) \ge 1$,
where $s$ and $t$ are defined as in part A.
\end{lemma}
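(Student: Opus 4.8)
The plan is to prove both parts directly from Theorem~\ref{dsc}, which characterizes ds-completability of the card $\Gv$ by the condition $D_v \subset N[v]$, where $D_v$ is the set of vertices having degree one less than a neighbour of $v$. Since $v$ has unique degree $d$, the key observation is that $v$ itself is never among the vertices that create obstructions to its own completability, and we can describe $D_v$ and its complement in $N[v]$ explicitly in terms of $\alpha_v$ and $\beta_v$. First I would unpack what "$v$ is not completable" means: by Theorem~\ref{dsc}, non-completability of $v$ is equivalent to the existence of at least one non-neighbour $w \ne v$ whose degree is one less than the degree of some neighbour of $v$. I would then translate this into two separate necessary conditions, one involving dull vertices and one involving bad vertices, as the statement requires.

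For Part~A, I would argue as follows. The condition "$v$ must not be adjacent to all the dull vertices not in $\alpha_v$" is the contrapositive of a completability observation: if $v$ \emph{were} adjacent to every dull vertex outside $\alpha_v$, then every vertex of degree one less than a neighbour of $v$ would lie in $N[v]$ (the ones in $\alpha_v$ are handled because $d$ is unique, so a vertex of degree $d-1$ can only be flagged through $v$'s own degree, which does not arise), and by Theorem~\ref{dsc} the card would be ds-completable, contradicting non-completability. Hence $\epsilon(v,s) < n_s$ for $s = (D-v)\setminus \alpha_v$. The second assertion, "$v$ must be adjacent to a bad vertex not in $\beta_v$," I would obtain by a symmetric argument: a bad vertex not in $\beta_v$ is precisely a vertex $u$ with $d_u - 1$ equal to some degree present, where $u \notin N[v]$ would create an obstruction unless $u$ is itself a neighbour; the non-completability forces at least one such $u$ to be a neighbour, giving $\epsilon(v,t) \ge 1$ for $t = (B-v)\setminus\beta_v$. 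The care needed here is to verify that restricting to vertices outside $\alpha_v$ and $\beta_v$ exactly removes the cases already controlled by the uniqueness of $d_v$, so that the remaining conditions are both necessary and correctly stated.

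For Part~B, the situation is the same characterization but localized to $\bar i$. The hypothesis is that $i$ is neighbourly and $v$ has no edge into $i$ (that is, $\epsilon(i,v)=0$). Because $i$ is neighbourly, the vertices of $i$ introduce no new degree-obstruction of the type "non-neighbour of degree one less than a neighbour" that is not already contained within $i$: by the definition of neighbourly, no vertex outside $i$ has degree one less than a vertex in $i$, so none of $v$'s neighbours inside $\bar i$ can flag a vertex of $i$ as a missing element of $D_v$, and $v$'s own lack of edges to $i$ means $i$ contributes nothing to the obstruction count for $v$. Therefore the completability test for $v$ reduces to examining only the vertices in $\bar i$, and the same reasoning as in Part~A applies with $s$ and $t$ intersected with $\bar i$, yielding $\epsilon(v,\,s\cap\ibar) < |s\cap\ibar|$ and $\epsilon(v,\,t\cap\ibar)\ge 1$.

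The main obstacle I anticipate is the bookkeeping around $\alpha_v$ and $\beta_v$ in Part~A: one must check precisely that excluding $\alpha_v$ from the dull-vertex condition and $\beta_v$ from the bad-vertex condition is exactly right, neither too strong nor too weak. This hinges on the uniqueness of $d_v$, which guarantees that a vertex of degree $d_v-1$ (i.e.\ in $\alpha_v$) could only be ``demanded'' by a neighbour of degree $d_v$, of which $v$ is the only instance, and likewise that $v$ being a bad neighbour of some vertex is governed by $\beta_v$. Making this exclusion rigorous—rather than merely plausible—is where the real content lies; once it is pinned down, both parts follow mechanically from Theorem~\ref{dsc}.
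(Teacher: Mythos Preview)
Your approach is essentially the paper's own: the paper simply states that the lemma ``follows immediately from the definitions'' (with the corollaries worked out separately), and deriving both parts from Theorem~\ref{dsc} is exactly that. Your identification of the role of uniqueness of $d_v$---namely that no neighbour of $v$ can have degree $d_v$, so vertices of $\alpha_v$ never enter $D_v$, and dually a witness pair $(u,w)$ to non-completability must have $d_u\ne d_v+1$---is the right mechanism.

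One small wording issue: in your sketch of the second assertion of Part~A you write that ``$u\notin N[v]$ would create an obstruction unless $u$ is itself a neighbour,'' which reverses the roles. The obstruction in Theorem~\ref{dsc} is a \emph{non}-neighbour $w\ne v$ with $d_w=d_u-1$ for some \emph{neighbour} $u$; it is this $u$ that must then be bad and, by uniqueness of $d_v$, lie outside $\beta_v$, giving $u\in t$ and hence $\epsilon(v,t)\ge 1$. For Part~B, the second inequality $\epsilon(v,t\cap\bar i)\ge 1$ is immediate from Part~A since $N(v)\subset\bar i$; for the first inequality you should make explicit why a potential witness $w$ cannot live in $i$ (this is where the neighbourliness of $i$ and $\epsilon(i,v)=0$ are jointly used), rather than asserting that the test ``reduces to $\bar i$'' without spelling out that step.
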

{\bf Corollary 1.} If a vertex $v$ has unique degree and $B \setminus \beta_v \subset i$, 
for some set $i$, then $\epsilon(v, \, i \cap B) \ge 1$.\\
{\bf Corollary 2.} If there is a $d$-neighbourly vertex set $j$
with $n_j = d+1$, and a distinct set $i$, then $\epsilon(i,j) \ge \chi_{ij} + 1$. \\
{\bf Corollary 3.} If a vertex $v$ has unique degree, then
for any distinct sets $i,j$ such that $s \subset j$, $v \notin i$,
and $d_{\rm min}(s) \ge n-n_i$, $\epsilon(i,j) \ge \chi_{ij} + 1$. \\
{\bf Corollary 4.} If there are exactly 3 bad degrees, and 
there are two dull vertices $u,v$ with unique degrees $d_u, d_v \ne d_u+1$,
such that $u$ is good and $v$ is bad, then $\epsilon(u,\, \beta_v)
= \epsilon(v,\, \beta_u) = 1$ and $\epsilon(u,v)=0$ (see figure \ref{f.uv}).

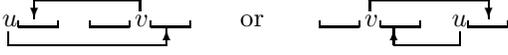
\begin{figure}
\begin{picture}(200,30)
\put(0,0){$u$}
\put(6,0){\put(0,0){\line(0,1){2}}\put(0,0){\line(1,0){15}}\put(15,0){\line(0,1){2}}}
\put(33,0){\put(0,0){\line(0,1){2}}\put(0,0){\line(1,0){15}}\put(15,0){\line(0,1){2}}}
\put(50,0){$v$}
\put(56,0){\put(0,0){\line(0,1){2}}\put(0,0){\line(1,0){15}}\put(15,0){\line(0,1){2}}}
\put(52,6){\put(0,0){\line(0,1){4}}\put(0,4){\line(-1,0){40}}\put(-40,4){\vector(0,-1){7}}}
\put(2,-2){\put(0,0){\line(0,-1){5}}\put(0,-5){\line(1,0){60}}\put(60,-5){\vector(0,1){6}}}
\put(90,0){\mbox{or}}
\put(120,0){
\put(0,0){\put(0,0){\line(0,1){2}}\put(0,0){\line(1,0){15}}\put(15,0){\line(0,1){2}}}
\put(17,0){$v$}
\put(23,0){\put(0,0){\line(0,1){2}}\put(0,0){\line(1,0){15}}\put(15,0){\line(0,1){2}}}
\put(50,0){$u$}
\put(56,0){\put(0,0){\line(0,1){2}}\put(0,0){\line(1,0){15}}\put(15,0){\line(0,1){2}}}
\put(19,6){\put(0,0){\line(0,1){4}}\put(0,4){\line(1,0){45}}\put(45,4){\vector(0,-1){7}}}
\put(53,-2){\put(0,0){\line(0,-1){5}}\put(0,-5){\line(-1,0){25}}\put(-25,-5){\vector(0,1){6}}}
}
\end{picture}
\caption{Illustrating corollary 4 to lemma \protect\ref{unique}. $u$ and $v$ are vertices
of unique degree; the horizontal brackets represent groups of vertices arranged in degree order; 
the arrows represent the presence of required edges in the associated graph if the sequence
is not forcing. There is no edge $uv$.}
\label{f.uv}
\end{figure}

\begin{proof}
The lemma follows immediately from the definitions;
the corollaries are simple examples. For the second and third corollaries, observe
that we cannot allow all the vertices in $j$ to have an edge
to the special vertex, because then that vertex would be completable. 
For the fourth corollary (c.f. figure \ref{f.uv}),
$v$ must be adjacent to a member of $\beta_u$ since these
are the only bad vertices with degree not equal to $d_v$ or 
$d_v+1$, and $v$ must not
be adjacent to $u$ because $u$ is the only dull vertex with degree not equal
to $d_v$ or $d_v-1$. It follows that $u$ must be adjacent to a vertex in $\beta_v$ since
these are the only bad vertices apart from $v$ whose degree is not $d_u+1$.
\qed
\end{proof}

Consider for example the sequences [1111{\bf 2}333 56] and [11111 3344{\bf 5}].
In the first case, let $v$ be the vertex of degree 2. Then the set $s$ consists
of one member, the vertex of degree 5.
This vertex is good, so it 
cannot be adjacent to any of the pendants (if they are not completable), therefore it
has to be adjacent to $v$, so we have $\epsilon(v,s)=1=n_s$
which breaks the condition stated in the lemma,
so the sequence is forcing. In the second case, the vertices of degree 3 cannot be adjacent to
the pendants, and nor can either of them have a closed neighbourhood containing all of $D$.
Hence they
must both be adjacent to the vertex $v$ of degree 5, so we have $\epsilon(v,s)=2=n_s$ so
this is forcing also. This argument is generalised in eqn (\ref{epssvms}).

Corollary 1 of lemma \ref{unique}, when used in conjunction with (\ref{xminedge}), 
suffices to prove that [1112 44{\bf 5}666] is forcing. The special vertex $v$ is the one 
of degree 5.  Let $j = \beta_v$, $i = \{ w : d_w < 3 \}$.
Since $v$ must be adjacent to the vertex of degree 2,
it uses up one of the stubs in $c = i \cap B$, with the result that
$\tau(c,j)=m_c-1=1$. Therefore (\ref{xminedge}) gives 
$\epsilon(i,j) \ge 3 + (3-1) = 5 = m_i$. But $\epsilon(i,j) \le m_i -1$ 
owing to the edge from $i$ to $v$, so we have a contradiction, hence the
sequence is forcing. 

Corollary 2 can be used to prove that [1112{\bf 3}455] is forcing, by taking
$j = \{ v : d_v \ge 3 \}$, $i =\bar{j}$.
One finds $\chi_{ij} = 5 = m_i$ so
$\epsilon(i,j) > m_i$ which contradicts (\ref{maxstubs}).

Corollary 3 can be used to prove that [1 333{\bf 4}55 778] is forcing, by taking
$d_v = 4$,
$j = \{ w : d_w > 6 \}$, $i = \{ w : d_w < 4 \}$.
We have $\chi_{ij} = 7$ so $\epsilon(i,j) \ge 8$ which contradicts (\ref{jneighbourly}).

[223334 {\bf 6}777]
If a good vertex $v$ has unique degree, then for any vertex set $i$,
\be
\epsilon(i,\, (\bar{i} \cap \bar{B}) \cup \beta_v + v) \le m_i - \sigma(i,v).       \label{corollary4}
\ee
{\em Proof}.
Any vertex in $i$ that is adjacent to $v$ cannot
have all its other edges to members of $\bar{B} \cup \beta_v$, since with $v$
they form a $d_v$-neighbourly set. \qed

[1233 {\bf 5}6666 8]. 
If there is a set $j$ containing a vertex $v$ of unique degree $d=n_j-1$, and such that
all bad vertices in $j$ have degree $n_j$, and
$\chi_{ij} \ge m_i - 2$ where $i = \bar{j}$, then the degree sequence is forcing.
{\em Proof}. Using corollary 2 to lemma \ref{unique}, $\epsilon(i,j) \ge m_i - 1$,
and therefore, using (\ref{oddsig}), $\epsilon(i,j) = m_i$. To avoid being
completable, the vertex $v$ must have a neighbour in $i$. But then that
neighbour will be completable, because all its neighbours are in $j$ and
the conditions are such that any set consisting of $v$ plus some members of $j$
is neighbourly. \qed

[1222 {\bf 4}55555].
If there is a unique good vertex $v$, and $d_v+1$ is the only bad degree greater
than $d_v$, then let $i = B \setminus \beta_v$ and 
$j = \{ d > d_v \}$. If $\sigma(i,j) \ge m_i - 2$ then the degree sequence is
forcing. {\em Proof}. If there is no completable vertex then $v$ must be
adjacent to a member of $i$. That member then requires an edge
to another member of $i$. Together these two edges require
3 stubs in $i$, but if $\sigma(i,j) \ge m_i - 2$ then there are
at most 2 stubs available after edges to $j$ have been accounted for. \qed

[11111 [33]44{\bf 5}]. 
If there is a non-dull vertex $v$ of unique degree,  let $s = D \setminus \alpha_v$
and $i = (V-v) \setminus D$.
We have $\tau(s,D) \le n_s(n_D-2)$ since if there is no completable vertex, then no vertex can
have a closed neighbourhood containing
all of $D$. We have that $V$ consists of distinct sets $i,\, D, \, \{ v \}$,
so, using (\ref{sumeps}),
\be
m_s \le \tau(s,i) + \tau(s,D) + \tau(s,v) 
\ee
which implies, using (\ref{jgood}) for $\tau(s,i)$, 
\be
\!\! \epsilon(s,v) \ge m_s - (m_i - (1 - \delta_s^B) n_i) - n_s(n_D-2) .  \label{epssvms}
\ee
If this number is $\ge n_s$ then, by lemma \ref{unique}, the degree sequence is forcing.

[1122 444 {\bf 6}77], [1122222 {\bf 5}67].
If there is a good dull vertex $v$ of unique degree $d \ge 3$, then let $j = \beta_v + v$
and $i =\{u : d_u < 3\}$. 
Either of the following conditions imply that the degree sequence is forcing:
\be
m_B < m_j + 1 \\
d  > n - (n_i + 3 + \mu_i)/2    \label{dnni}
\ee
{\em Proof.} In the first case, we have $m_B = m_k + (m_j-d)$ where $k = \bar{j} \cap B$. 
If the sequence is not forcing, then every
neighbour of $v$ requires an edge to $k$, and so does $v$. This is not possible
if $m_k < d+1$, which gives the condition. In the second case, we have that
$v$ is adjacent to at least $d - \xi_i$ members of $i$, and these neighbours may not be pendants,
therefore they have degree 2. No vertex of degree $d+1$ may be adjacent to any of
these, or we would have a vertex of degree 2 with a neighbourly neighbourhood. Hence
if $d+1 > n-1-(d - \xi_i)$ then the sequence is forcing. Furthermore, if $d+1=n-1-(d-\xi_i)$
then a vertex has a neighbourhood which contains all of $\bar{i}$ and all the pendants,
and therefore all of $D$ when $\bar{i}$ is neighbourly, so in this case $d$ must be
larger still to avoid forcing. (\ref{dnni}) follows.
\qed

[333[4444] {\bf 7}[88]].
If there is a good dull vertex $v$ of unique degree $d$, 
and no dull vertices of higher degree, then let
$h = \{ w : d_w > d \}$ and
$k = B \setminus h = B \setminus \beta_v$. If 
\be
m_k < \sigma(k, h) + 2 \sigma(k,v) + (\sigma(k,v) \mbox{ mod }2) + x
\ee
where $x = n-(n_k+n_h)-2 + \delta_k^D - \kappa(V,1)$ 
then the degree sequence is forcing. {\em Proof}.
By lemma \ref{unique},
the conditions are such that if the sequence is not forcing, then any vertex adjacent to $v$
must be adjacent to a vertex in $k$. The right hand side of the inequality gives a lower
bound on the number of stubs in $k$ required. Each edge contributing to $\sigma(k,h)$ requires
one stub in $k$. Each edge contributing to $\sigma(k,v)$ requires one stub in $k$, and then another
so that the vertex also has a neighbour in $k$. The $(\sigma(k,v) \mbox{ mod }2)$ term accounts
for the extra stub required when vertices in $k$ that neighbour on $v$ can't be merely joined in pairs.
Finally, $x$ is the number of good vertices
that must be adjacent to $v$ on the
assumption that we are making $\epsilon(k,v)$ as small as possible. 
For the number of vertices not in $k \cup h + v$ is $n-n_k-n_h - 1$, and 
$v$ must be adjacent to all but $\kappa(V,1) + (1-\delta_k^D)$ of them when $\epsilon(k,v)$ is minimised.
\qed

[123 5555 77{\bf 8}].
Let $d_v$, $d_u$ be the highest and next highest bad degrees, respectively. If $d_v$ is unique
and $d_v-1$ is good, then let $i = \{ w : d_w \le d_u \}$ and $h = \{ w : d_w \ge d_v - 1 \}$.
If $i \cup h \ne V$ and $\chi_{ih} \ge m_i - 2$ then the degree sequence is forcing. {\em Proof}.
Let $v$ be the vertex having degree $d_v$.
We have $D \setminus \alpha_v \subset i$, therefore, from lemma \ref{unique}, $v$ cannot be adjacent to all
of $i$. Since $B - v \subset i$, this implies that at least one vertex in $i$ must have an edge to
another vertex in $i$. This edge uses up two of the stubs in $i$, so under the condition 
$\chi_{ih} \ge m_i - 2$, every vertex in $j$ must be adjacent to all of $\bar{i}$ except itself.
It follows that each vertex not in $i$ or $j$ is adjacent to all of $\alpha_v$ and none of
$B - v$. Such vertices must be completable. \qed

If there is a bad unique vertex $v$, such that $\alpha_v$ is good, then let
$j = \alpha_v$. Then if there is no completable vertex,
\be
\tau(j,j) \le n_j(n_j-2) + m_B - d_v - 2.      \label{taujjv}
\ee
{\em Proof}. This is an example of relation (\ref{taukk}).
Let $c = B - v$. By lemma \ref{unique}, if there is no completable
vertex then $v$ must have an edge to $c$, and $v$ must not be adjacent to all the
dull vertices whose degree is one less than a member of $c$. 
Therefore there is an edge from $c$ to $v$, and there is an edge
between a vertex in $c$ and at least one other vertex not in $j$
(since every vertex requires a bad edge). Hence we have $\tau(c,j) \le m_c-2
= m_B - d_v - 2.$ One then obtains (\ref{taujjv})
by the same argument as for (\ref{taukk}). \qed

[3334 66666{\bf 7}].
In order to prove that this sequence is forcing, substitute
$\sigma(B,j) \ge m_j - (\tau(j,j) + m_g - n_g)$ into condition (\ref{mBbound}),
using (\ref{taujjv}) to provide a bound on $\tau(j,j)$, and
$g = \bar{B} \setminus \alpha_v$.

\subsection{Conditions relating to pendants}

If
\be
d_{\rm max}(\bar{B}) > n - 1 - \kappa(V,1) - x
\ee
where $x=1$ if $\kappa(V,2)=0$ and $x=0$ otherwise, then the sequence is forcing.
{\em Proof}. A good vertex having a degree above this bound is either adjacent to a pendant
(so the pendant is completable) or $x=1$ and it 
is adjacent to all the vertices except the pendants and itself; in the latter case it is
completable since $x=1$ implies there are no vertices of degree 2. \qed

[11111 3344{\bf 5}]. 
If $\kappa(V,1) = n/2$ and 
there is a vertex $v$ of unique degree $d_v = n - \kappa(V,1) = n/2$,
and $D = \{ w : d_v-2 \le d_w < d_v \}$, then the degree sequence is
forcing. {\em Proof}. This is an example of the situation treated by
eqn (\ref{epssvms}). In the case under consideration, 
$s = D \setminus \alpha_v$ is good and $i$ consists of pendants, so 
eqn (\ref{epssvms}) reads $\epsilon(s,v) \ge m_s - n_s(n_D-2)$. Also,
we have $m_s = n_s (d_v-2)$, so this becomes $\epsilon(s,v) \ge n_s(d_v-n_D)$.
Finally, $n_D < n - \kappa(V,1)$ since
pendants are never dull if there is no completable vertex, and by
assumption $v$ is also not dull. So if $d_v = n - \kappa(V,1)$ then
$d_v > n_D$ and we have $\epsilon(s,v) \ge n_s$,
which breaks the condition set by lemma \ref{unique}. \qed

[12 {\bf 45}66666 8].
If there are exactly 3 bad degrees, and these are $2, d+1, d+2$, with
$\kappa(V,2) = \kappa(V,d) = \kappa(V,d+1) = 1$, then if
\be 
d > (n - 2 - \kappa(V,1)) / 2   \label{d1}
\ee
then the sequence is forcing. {\em Proof.} 
Let $u$ be the vertex of degree $d$ and $v$ be the vertex of degree $d+1$. 
If there is no completable vertex, then by lemma  \ref{unique},
$v$ is adjacent to the vertex of degree 2, and not to the pendant. 
It follows that $u$ may not be adjacent to the vertices of degree
1 or 2, and therefore it may not be adjacent to $v$ either (or its closed neighbourhood
will be $d$-neighbourly). Hence all neighbours of $u$ are in $h = \{ w : d_w > d+1\}$.
Equally, since there is no edge $uv$, vertex $v$ also has $d$ neighbours in $j=h$. 
The number of vertices in $h$ adjacent to both $u$ and $v$ is therefore
greater than or equal to $2d - n_h$. At most one of these can have the vertex
of degree 2 as a bad neighbour, so if $2d - n_h > 1$ then there is a vertex
adjacent to $u$ and $v$ but not 2; such a vertex is completable. We have
$n_j = n-\kappa(V,1)-3$ and (\ref{d1}) follows. \qed

[1233$\ldots$ [good] $uv$].
If $\kappa(V,1) = \kappa(V,2) = \kappa(V,d) = \kappa(V,d+1) = 1$
for some $d$, and $\kappa(V,3) > 0$ and there are exactly three bad degrees
(namely $2,\, 3, \, d+1$), then if
\be
d \ge n-3
\ee
then the sequence is forcing. {\em Proof.} 
Let $u,v$ be the vertices of degree $d,d+1$. 
If there is no completable vertex, then the vertex of degree 2 must
be adjacent to $v$, and not to $u$. Therefore $v$ may not be adjacent
to the pendant (lemma \ref{unique}), and nor may $u$ (or the pendant
will be completable). Since the pendant requires a bad neighbour, it
is therefore adjacent to a vertex of degree 3. But if $d \ge n-3$ then
both $u$ and $v$ are adjacent to all the vertices of degree 3, so the
one with the pendant neighbour is completable.  \qed

\subsection{Conditions relating to $\sigma = \tau$}

The theme of the conditions in this section is that when edges from a set $j$ of high degree
use up all the available stubs in a set $i$ of low degree, then the vertices of intermediate
degree must have all their edges to each other or to $j$.

\begin{lemma}
	If $\sigma(i,\bar{i}) = m_i$ for a set $i$ with neighbourly $\bar{i}$,
	and there is no completable vertex, then
	\be
	\tau(i,p) \le \sum_{v \in i} \min(d_v, n_p-1)
	\ee
where $p = \bar{i} \cap D$. 
\end{lemma}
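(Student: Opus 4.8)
The plan is to read off a structural consequence of the hypothesis, then to show that no vertex of $i$ can be joined to every dull vertex of $\bar{i}$ without becoming completable, and finally to convert this into the stated edge count. First I would note that every stub on a vertex of $i$ either remains inside $i$ or leaves for $\bar{i}$, so $\epsilon(i,i)+\epsilon(i,\bar{i}) = m_i$ and in particular $\epsilon(i,\bar{i}) \le m_i$. Combining this with $\sigma(i,\bar{i}) \le \epsilon(i,\bar{i})$ and the hypothesis $\sigma(i,\bar{i}) = m_i$ forces $\epsilon(i,\bar{i}) = m_i$, hence $\epsilon(i,i) = 0$. Thus $i$ carries no internal edges, and every neighbour of a vertex $v \in i$ lies in $\bar{i}$.

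Next I would locate, for a fixed $v \in i$, the set $D_v$ of Theorem \ref{dsc}, namely the vertices whose degree is one less than that of some neighbour of $v$. Since every neighbour $w$ of $v$ lies in $\bar{i}$ and $\bar{i}$ is neighbourly, no vertex outside $\bar{i}$ — and in particular no vertex of $i$ — has degree $d_w-1$; so every vertex of degree $d_w-1$ lies in $\bar{i}$. Each such vertex is dull, because a vertex (namely $w$) of degree $d_w$ exists. Therefore $D_v \subseteq \bar{i} \cap D = p$.

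The key step is then a short contradiction argument. Suppose some $v \in i$ were adjacent to all of $p$. Then $D_v \subseteq p \subseteq N(v) \subseteq N[v]$, so by Theorem \ref{dsc} the card $\Gv$ would be ds-completable, contradicting the assumption that no vertex is completable. Hence each $v \in i$ misses at least one member of $p$ and so has at most $n_p-1$ neighbours in $p$; it also has at most $d_v$ neighbours altogether. The number of edges from $v$ to $p$ is therefore at most $\min(d_v, n_p-1)$, and summing over $v \in i$ bounds $\epsilon(i,p)$, and hence $\tau(i,p)$, by $\sum_{v \in i} \min(d_v, n_p-1)$.

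The one delicate point, and the step I expect to require the most care, is establishing $D_v \subseteq p$: this inclusion genuinely uses both facts proved earlier, namely that $i$ is independent (so that the neighbours generating $D_v$ all lie in $\bar{i}$) and that $\bar{i}$ is neighbourly (so that the vertices one degree lower cannot leak back into $i$). Once $D_v \subseteq p$ is secured, the remainder is a routine degree count and the contradiction with completability.
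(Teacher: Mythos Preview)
Your proposal is correct and follows essentially the same approach as the paper: deduce from $\sigma(i,\bar{i})=m_i$ that every edge from $i$ goes to $\bar{i}$, and then argue that no vertex of $i$ may be adjacent to all of $p$ without being completable. The paper states this last step in a single clause, whereas you spell it out carefully via the inclusion $D_v \subseteq p$ and Theorem~\ref{dsc}; this is a welcome elaboration rather than a different route.
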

{\em Proof}. When $\sigma(i,\bar{i}) = m_i$, all edges from $i$ go to vertices
in $\bar{i}$. In this case, when $\bar{i}$ is neighbourly no vertex in $i$ may be adjacent to all of $p$, or it will be completable. \qed

This lemma suffices to prove that [[122233] 566 8] is forcing. In this example,
$p$ consists of the vertex of degree 5 and we find $\tau(i,p) = 0$. However
$\sigma(i,p) \ge \chi_{ip} = 2$, so we have $\sigma(i,p) > \tau(i,p)$ which
is a contradiction. \qed

\begin{lemma} \label{lemgtoh}
	If $\chi_{ij} \mu_i = m_i$ for distinct sets $i,j$ and there is no completable vertex, then 
	\be
	\epsilon(g,s) = m_g   \label{gtoh}
	\ee
	where $g = i \setminus B$ and $s = j \setminus \{ u : d_u = n-n_i \}$.
\end{lemma}
{\em Proof}. (c.f. figure \ref{f.gh}.) When $\chi_{ij} = m_i$, all the edges from $i$ go to $j$, and each
vertex in $j$ is adjacent to all the vertices in $\bar{i}$ except itself. 
The vertices of degree $n-n_i$ then have exactly one edge each to $i$, and 
when $\bar{i}$ is neighbourly, this edge must be to a bad vertex in $i$. Hence those
vertices have no edges to $g$. The result follows. \qed

\begin{figure}
	\begin{picture}(200,30)
	\put(0,0){\put(1,1){2233}\put(0,0){\line(0,1){2}}\put(0,0){\line(1,0){20}}\put(20,0){\line(0,1){2}}}
	\put(25,0){\put(1,1){666778}\put(0,0){\line(0,1){2}}\put(0,0){\line(1,0){29}}\put(29,0){\line(0,1){2}}}
	\put(0,9){\put(0,0){\line(0,-1){2}}\put(0,0){\line(1,0){10}}\put(10,0){\line(0,-1){2}}}
	\put(39,9){\put(0,0){\line(0,-1){2}}\put(0,0){\line(1,0){15}}\put(15,0){\line(0,-1){2}}}
	
	\put(5,10){\put(0,0){\line(0,1){4}}\put(0,4){\line(1,0){40}}\put(40,4){\vector(0,-1){6}}}
	\put(8,-2){\put(0,0){\line(0,-1){5}}\put(0,-5){\line(1,0){30}}\put(30,-5){\vector(0,1){6}}}
	\end{picture}
	\caption{An illustration of lemma \protect\ref{lemgtoh}.  
		The arrows represent cases where all the edges from 
		the vertex set at the start of the arrow must go to the vertex set at the end of the arrow,
		if no vertex is completable. The lower arrow represents the condition $\chi_{ij}=m_i$;
		the upper arrow represents eqn (\protect\ref{gtoh}).}
	\label{f.gh}
\end{figure}
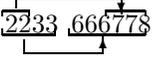

\begin{lemma}
For any vertex set $i$ whose maximum degree is less than
$\xi_i$, let $h = \{ u : d_u > \xi_i \}$. If 
there is no completable vertex and
\be
\left(\chi_{ih} +  \kappa(\bar{i}, \xi_i) \right) \mu_i  = m_i  \label{ihcond}  
\ee
then
\be
\epsilon(c, \, h) &\ge& 
n_h + \sum_{w \in h} \left[ d_w - n+n_c+1-\delta_c^D\right]^+,  \label{epsiBh} \\
\epsilon(i, \, s) &=& n_s ,  \label{epsink}
\ee
where $c = i \cap B,\; s = \{ u : d_u = \xi_i \}$.
\end{lemma}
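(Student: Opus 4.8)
The plan is to run the same ``rigidity, then refinement'' strategy that underlies Lemma \ref{lemgtoh} and its companions. First I would observe that under the standing hypothesis (no completable vertex) the set $i$ cannot contain an isolated vertex (those are always completable, by Lemma \ref{simple}), so $m_i>0$ whenever $n_i\ge 1$; hence (\ref{ihcond}) can only hold with $\mu_i=1$, i.e.\ $\bar{i}$ neighbourly, and it then reads $\chi_{ih}+\kappa(\bar{i},\xi_i)=m_i$. Since every vertex of $i$ has degree below $\xi_i$, both $h=\{u:d_u>\xi_i\}$ and $s=\{u:d_u=\xi_i\}$ are disjoint from $i$; in particular $s\subset\bar{i}$ and $\kappa(\bar{i},\xi_i)=n_s$.

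The first substantive step is to show that (\ref{ihcond}) forces a rigid edge pattern out of $i$. I would bound $\epsilon(i,\bar{i})$ below by summing, over each $u\in\bar{i}$, its edges into $i$: each $w\in h$ sends at least $d_w-\xi_i$ edges to $i$ (at most $\xi_i$ of its $d_w$ edges can stay inside $\bar{i}$), contributing $\chi_{ih}=\sum_{w\in h}(d_w-\xi_i)$ in total; each $v\in s$ sends at least one, because if it had no edge to $i$ then $N[v]=\bar{i}$, which, $\bar{i}$ being neighbourly, would make $v$ completable. Thus $\epsilon(i,\bar{i})\ge\chi_{ih}+n_s=m_i$, while $\epsilon(i,\bar{i})\le m_i$ by (\ref{epsitot}). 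Every inequality is therefore tight: $i$ has no internal edges, each $w\in h$ is adjacent to all of $\bar{i}\setminus\{w\}$, each $v\in s$ has exactly one edge to $i$, and no vertex of $\bar{i}$ of degree below $\xi_i$ reaches $i$. Equation (\ref{epsink}) is then immediate, since $\epsilon(i,s)=\sum_{v\in s}1=n_s$.

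The remaining work is the per-vertex bound (\ref{epsiBh}) on edges from $h$ into $c=i\cap B$. Fix $w\in h$ and let $A_w$ be its neighbours inside $i$, so $|A_w|=d_w-\xi_i$. Because $w$ is adjacent to all of $\bar{i}\setminus\{w\}$ and $\bar{i}$ is neighbourly, Theorem \ref{dsc} forces any witness to the non-completability of $w$ to be a pair $(x,y)$ with $x\in A_w$ a neighbour in $i$ and $y\in i$ a non-neighbour of degree $d_x-1$: a good $x$ has no such $y$, and an $x\in\bar{i}$ is excluded since its partner $y$ would be a vertex of $i$ of degree one below that of $x\in\bar{i}$, contradicting neighbourliness of $\bar{i}$. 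Hence $x\in c$, giving $|A_w\cap c|\ge 1$. Counting against the $n_g=n_i-n_c$ good vertices of $i$ gives the overflow bound $|A_w\cap c|\ge |A_w|-n_g=d_w-n+n_c+1$, so in general $|A_w\cap c|\ge\max(1,\,d_w-n+n_c+1)$.

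The delicate point, which I expect to be the crux, is the improvement by one when $\delta_c^D=0$. If $c$ contains no dull vertex, the partner $y$ cannot be bad: a bad $y$ of degree $d_x-1$ with $x\in c$ would make $y$ a bad-and-dull vertex of $i$, i.e.\ $y\in c\cap D$, contradicting $\delta_c^D=0$. So $y$ is good, forcing $w$ to miss at least one good vertex, $|A_w\cap g|\le n_g-1$, which sharpens the pigeonhole to $|A_w\cap c|\ge |A_w|-(n_g-1)=d_w-n+n_c+2$. The two cases combine into $|A_w\cap c|\ge\max(1,\,d_w-n+n_c+2-\delta_c^D)=1+\left[d_w-n+n_c+1-\delta_c^D\right]^+$, using the identity $1+\max(0,t)=\max(1,t+1)$. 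Summing over $w\in h$ (the disjoint sets $c,h$ make $\epsilon(c,h)$ count exactly these edges) yields (\ref{epsiBh}). Verifying that the non-completability witness always has the asserted form, and that its partner is good precisely when $\delta_c^D=0$, is the part that will need the most care.
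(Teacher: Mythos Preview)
Your proposal is correct and follows the same underlying route as the paper: use (\ref{ihcond}) to force the rigid configuration in which every $w\in h$ is adjacent to all of $\bar{i}\setminus\{w\}$ and every $v\in s$ has exactly one edge into $i$, read off (\ref{epsink}), and then count edges from each $w\in h$ into $c$ using neighbourliness of $\bar{i}$ plus pigeonhole against $g$. The paper's proof is considerably terser---it justifies the sum in (\ref{epsiBh}) with a single sentence and does not spell out the $\delta_c^D$ case distinction at all---so your explicit treatment of the witness pair $(x,y)$ and the observation that $\delta_c^D=0$ forces $y\in g$ (hence $|A_w\cap g|\le n_g-1$) supplies detail that the paper leaves to the reader.
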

{\em Proof}. If the condition holds then there is no room in $i$ for extra edges from $h$
in addition to the minimum number that go there on the assumption that every vertex in $h$
is adjacent to the whole of $\bar{i}$ except itself. When $\bar{i}$ is neighbourly, every 
vertex in $h$ then requires a bad neighbour in $i$ to avoid being completable, which
gives the first term in (\ref{epsiBh}). The second term allows for the further edges
which are unavoidable when vertices in $h$ have high degree. 
The condition also requires that each vertex in $s$ has one edge
to $i$, which gives (\ref{epsink}). 
\qed

[122 44555], [1222 556667].
Suppose (\ref{ihcond}) holds, and therefore so do (\ref{epsiBh}) and (\ref{epsink}).
Using (\ref{epsink}), (\ref{sigmtau}) and (\ref{jgood}) we have
\be
\epsilon(c,s) \ge n_s - m_g + (1 - \delta_s^B)n_g   \label{epsckg}
\ee
where $g = i \setminus c$. Hence if
\be
m_c < \sigma(c,h) + n_s - m_g + (1 - \delta_s^B)n_g  \label{mchkg}
\ee
where $\sigma(c,h)$ is given by any applicable methods, such as (\ref{xminedge})
and (\ref{epsiBh}), then the degree sequence is forcing.

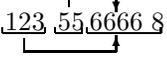
\begin{figure}
\begin{picture}(200,30)
\put(0,0){\put(1,1){123}\put(0,0){\line(0,1){2}}\put(0,0){\line(1,0){16}}\put(16,0){\line(0,1){2}}}
\put(20,0){\put(1,1){55}\put(0,0){\line(0,1){2}}\put(0,0){\line(1,0){10}}\put(10,0){\line(0,1){2}}}
\put(32,0){\put(1,1){6666 8}\put(0,0){\line(0,1){2}}\put(0,0){\line(1,0){28}}\put(28,0){\line(0,1){2}}}

\put(25,10){\put(0,0){\line(0,1){4}}\put(0,4){\line(1,0){18}}\put(18,4){\vector(0,-1){6}}}
\put(8,-2){\put(0,0){\line(0,-1){5}}\put(0,-5){\line(1,0){35}}\put(35,-5){\vector(0,1){6}}}
\end{picture}
\caption{An illustrative example. The arrows represent cases where all the edges from 
the vertex set at the start of the arrow must go to the vertex set at the end of the arrow,
if no vertex is to be completable. The lower arrow is an example of (\protect\ref{ihcond});
the upper arrow is an example of (\protect\ref{taukk}) combined with (\protect\ref{taumsig})
and (\protect\ref{sumeps}).}
\label{f.hk}
\end{figure}

[123 [55][6666 8]]. 
For any vertex set $i$ such that $\bar{i}$ is neighbourly, let 
$p = \bar{i} \cap D$ and $q = \bar{i} \setminus p$. 
If the condition (\ref{ihcond}) holds, and all the vertices
in $p$ have degree $\xi_i - 1$, 
and the only bad degree in $q$ is $\xi_i$,
then the degree sequence is forcing. {\em Proof}. 
If we assume there is no completable vertex, then using
(\ref{taukk}), (\ref{sigmtau}) and $m_p = n_p( \xi_i - 1)$
we obtain $\sigma(p,q) = n_p n_q$. Hence all
of the vertices in $q$ are adjacent to all of $\bar{i} \cap D$, 
(see figure \ref{f.hk} for an example). It follows that
when $\bar{i}$ is neighbourly, each vertex in $q$ requires an edge to a bad vertex in $i$. 
Now, when (\ref{ihcond}) holds, the vertices of degree $\xi_i$ each have
one and only one edge to $i$, and furthermore no vertex in $i$ is
only adjacent to good vertices in $q$. It follows that 
all the good vertices in $i$ are adjacent to vertices of degree 
$\xi_i$ in $q$, and such vertices in $q$ have no other edge to $i$.
Consequently they are completable. \qed

[2233 [666[778]]]. (See figure \ref{f.gh})
If $\chi_{ij} \mu_i = m_i$ for distinct sets $i,j$
with $d_{\rm min}(j) \ge \xi_i$ and
$(i \cap B) \cap D = \emptyset$, then if
\be
m_g > n_s(n_g - 1)   \label{mghg}
\ee
where $g = i \setminus B$ and $s = j \setminus \{ u : d_u = n-n_i \}$
then the degree sequence is forcing.
{\em Proof}. 
Using (\ref{gtoh}) we have $\epsilon(g,s) = m_g$ if there is no completable vertex.
When $\chi_{ij} = m_i$ we also have that every vertex in $s$ is adjacent to all of
$\bar{i}$, from which it follows that no vertex in $s$ may be adjacent to all of
$i \cap D$. But the condition $(i \cap B) \cap D = \emptyset$ implies
$i \cap D \subset g$. So we require that no vertex in $s$ is adjacent to
all of $g$, and therefore $\epsilon(g,s) \le n_s(n_g - 1)$. This
condition cannot be realized when $\epsilon(g,s) = m_g$ and 
(\ref{mghg}) holds. \qed

[112 [455][6666]].
For any vertex set $i$ such that $\bar{i}$ is neighbourly, let 
$c = i \cap B$, $p = \bar{i} \cap D$, $q = \bar{i} \setminus p$. 
If $n_q \ge m_c$
and $\sigma(i,q) = m_i$ and there is no completable vertex, then
\be
n_p(n_p+n_q-2) \ge m_p +n_q - m_c .   \label{nknk}
\ee
{\em Proof}. Any vertex in $q$
that is adjacent to all the vertices in $p$ requires an edge to $c$, because otherwise
its neighbourhood would be neighbourly. Hence at most $m_c$ of the vertices
in $q$ can be adjacent to all of $p$, therefore $\tau(p,q) \le n_p m_c + (n_p-1)(n_q - m_c)$.
Using (\ref{sigmtau}) we have $\sigma(p,p) \ge m_p - (\tau(p,q) + \tau(p,i)) = m_p - \tau(p,q)$
since $\epsilon(i,p)=0$ when $\sigma(i,q)=m_i$. Finally, since $\epsilon(i,p)=0$ no
vertex in $p$ may be adjacent to all the other vertices in $p$, because when $\bar{i}$
is neighbourly, this set includes
all the vertices whose degree is one less than a vertex in $\bar{i}$. Therefore
$\tau(p,p) \le n_p(n_p-2)$. Eqn (\ref{nknk}) follows by asserting $\sigma(p,p) \le \tau(p,p)$. \qed

[11122[3] [55][66]].
If there is a good set $j$ and a distinct set $i$ such that
$\sigma(i,j) = m_i - n_i$ and $\chi_{ik} > n_i - \kappa(i,\, n_j+1)$, where
$k = \{w : d_w = d_{\rm max}(j)+1\}$,
then the degree sequence is forcing. {\em Proof}.
Let $s = \{ w : d_w = n_j + 1\}$ and suppose the sequence is not forcing.
The condition $\sigma(i,j)=m_i - n_i$ implies that
after edges from $j$ have been accounted for, there is one stub left
on each vertex in $i$.
Each vertex in $s$ is therefore adjacent to
all of $j$, and has one edge to a vertex not in $j$. That final neighbour
cannot be in $k$ or the vertex is $s$ is completable, so $\tau(s,k) = 0$.
Also, there is at most one edge from $i$ to $k$ for each vertex in $i$,
so $\tau(i \setminus s, k) \le n_i - n_s.$ Under these conditions, 
eqn (\ref{sigmtau}) gives $\epsilon(s,k) \ge \chi_{ik} - (n_i - n_s)$.
The condition $\epsilon(s,k) \le \tau(s,k)$ then requires
$\chi_{ik} \le n_i - n_s = n_i - \kappa(i,\, n_j+1)$. \qed

[[12] 444455 [7]8].
If there is a good set $j$ and a distinct set $i$ such that
$\sigma(i,j) = m_i - n_i$, $\kappa(i,n_j) > 0$ and
$\sigma(s,k) > 0$, then the degree
sequence is forcing, where $s = \{ v : d_v = n_j \}$ and
$k = \{ v : d_v = d_{\rm max}(j) + 1 \}$. {\em Proof}.
After edges from $j$ are accounted for, each vertex in $i$ has
one stub remaining. We have $s \subset i$. If there is no
completable vertex, then the final edge from a vertex in $s$ cannot 
go to $k$, but the condition $\sigma(s,k) > 0$ requires that it
does, which is a contradiction. \qed

\subsection{Conditions involving $\sigma=\tau$ and unique vertices}

[1233 [{\bf 5}6666 8]]. Suppose there is a good vertex $v$ of unique degree 
$d = \xi_i$, and
$\sigma(i,j) = m_i$ where $j = \{w : d_w \ge d \}$, and $i$ is some vertex set
distinct from $j$. If $\bar{i}$ is neighbourly
and the only bad degree above $d$ is $d+1$, then
the sequence is forcing. {\em Proof}. The conditions are such that 
(\ref{corollary4}) gives $\epsilon(i,j) \le m_i - 1$ which contradicts
$\sigma(i,j) = m_i$. \qed

[1122 4{\bf 5}[6667]]. We now extend eqn (\ref{mchkg}) in the case where 
$n_s=1$, that is, there is a single vertex $v$ of degree $\xi_i$. If the condition
(\ref{ihcond}) holds, then there are no vertices of degree below $n_h$ in
$\bar{i}$, but there may be vertices of degree equal to $n_h$. Any such vertices
have all their edges to $h$ when the conditions holds, therefore they are not
adjacent to $v$. It follows that $v$, which must be adjacent to all 
of $\bar{i}$ except two vertices (itself and one other), 
has edges to all of $h$. Since $v$ is of unique degree $d$, it forms
a $d$-neighbourly set with $h$, so its edge to $i$ must go to a bad
vertex, and we have $\epsilon(c,s) = 1$ (whereas in this case
(\ref{epsckg}) merely gives $\epsilon(c,s) \ge 1 - m_g$ if $s$ is bad).
Hence under the assumed conditions, (\ref{mchkg}) can be replaced by
\be
m_c < \sigma(c,h)  + 1.
\ee

[112 4445{\bf 6}[78]]. This case is similar to the previous one.
If condition (\ref{ihcond}) holds, and 
$n_s = \kappa(\bar{i},\xi_i-1) = 1$,
$\epsilon(c,h) \ge m_c$,
and there are no bad degrees in $\bar{i}$ smaller than $\xi_i-1$, then the
degree sequence is forcing. {\em Proof}. 
Let $u$ be the vertex of degree $\xi_i-1$ and $v$ be the vertex of degree $\xi_i$.
When condition (\ref{ihcond}) holds, both $u$ and $v$ are adjacent to all of $h$.
$h+v$ is $d_v$-neighbourly, so
to avoid being completable, $v$ requires an edge to a bad vertex not in
$h$. When $\epsilon(c,h) \ge m_c$ this edge cannot be to $c$ so it must be to $u$.
But this implies that $u$ is adjacent to all of $(p-u) \setminus \alpha_u$
and therefore, since $u$ can have no edges to $i$, it must be completable.
\qed

[122 {\bf 4}555 [77]8]. 
If $\chi_{ij} = m_i - n_i$ for a good vertex set $j$ and there
is a good unique vertex $v \in (\ibar \setminus j)$ 
such that $(D-v) \setminus i \subset j$
then the degree sequence is forcing. {\em Proof}. Under the conditions,
$v$ can have no edges to $i$, and each member of $j$ is adjacent to
all of $\ibar \setminus j$. It follows that $v$ does not satisfy
the condition stated in part B of lemma \ref{unique}. \qed

[{\bf 1}222 [44]555 [8]],
[{\bf 1}222 [444]56 [8]]. 
Suppose $d_j = n-2-n_i \ne d_{\rm min}$ is a good degree, where 
$i = \{ w : d_w \le d_{\rm min}+1\}$ and $d_{\rm min}$ is the lowest
value in the degree sequence. Consider the case
where only one vertex $u$ has degree $d_{\rm min}$, 
and either there are 2 bad degrees $[d_{\rm min}+1, \, d_j+1]$
or there are 3 bad degrees $[d_{\rm min}+1,\, d_j+1,\, d_j+2]$, 
and in the second case
$d_j+1$ and $d_j+2$ are both unique.
Let $s = \{ w : d_w > d_j\} \setminus B$.
If $\sigma(i,s) = m_i - n_i$ then the sequence is forcing. {\em Proof}. 
Under the condition on $\sigma(i,s)$ 
there can be no edges from $i$ to good vertices not in $s$. 
Let $j = \{ w : d_w = d_j\}$. We have that $j$ is good, so 
$\sigma(i,j) = 0$. Therefore all the edges from vertices in $j$ go to $\bar{i}$. 
First suppose there are only 2 bad degrees. In this case
$j$ are the only dull vertices in $\bar{i}$, hence no vertex in $j$ may
be adjacent to all the other vertices in $j$. Hence, when the condition
$d_j = n-2-n_i$ holds, every vertex in $j$ is adjacent to every vertex in
$\bar{i} \setminus j$.
Now, by lemma \ref{unique}, $u$ must be adjacent to a bad vertex in $\bar{i}$.
That vertex is completable, since it is adjacent to all of $D$. 
Next suppose we have 3 bad degrees, the highest two of which are
unique. Let $v$ be the vertex of degree $d+1$ and $w$ be
the vertex of degree $d+2$.
Any vertex in $j$ must be adjacent to $w$ because otherwise the
vertex in $j$ would have a closed neighbourhood
containing all of $\bar{i} \cap D$ and consequently be completable.
Also, noting that $v$ is both bad and dull, by using corollary 4 of 
lemma \ref{unique} we have $\epsilon(u,w) = 1$. 
It follows that $D \subset N(w)$ so $w$ is completable. \qed

[1111122 [4]{\bf 5}6].
If $\chi_{ij} = m_i - n_i$ for good $j$ with
$d_{\rm min}(c) > n_j$ and there is a vertex $v$ of unique degree
$d = d_{\rm max}(j) + 1$ and the only bad degrees in $\ibar$ are
$d,\, d+1$, then the sequence is forcing. {\em Proof}. The condition
$\chi_{ij} = m_i-n_i$ implies that all but one stub on each vertex in $i$
is used by edges to $j$. The condition $d_{\rm min}(c) > n_j$ implies
that, in order to fill all but one stub of any vertex in $c$, edges from
all the vertices of $j$ are required, so every vertex in $c$ is adjacent
to all of $j$. By lemma \ref{unique}, vertex $v$ requires
a neighbour in
$(B-v) \setminus \beta_v = c$. That neighbour is completable. \qed

[111 {\bf 4}5555{\bf 6}7].
If there are two unique vertices $u,v$ satisfying the conditions
of corollary 4 of lemma \ref{unique}, with $d_u + 2 = d_v = \xi_i$,
and $\chi_{ij} + n_j = m_i$ for a non-dull set
$i$ and the distinct set $j = \{ w : d_w \ge \xi_i\}$,
then the degree sequence is forcing. {\em Proof}.
The conditions are such that $D \subset \bar{i}$ and
the only bad degrees are $d_v,\,d_v \pm 1$. When $D \subset \bar{i}$,
to avoid being completable, no vertex
may be adjacent to all of $\bar{i} - w$ where $w$ is one of the vertices
having the highest bad degree (since such vertices cannot be dull).
The condition $\chi_{ij} + n_j = m_i$
implies that each vertex in $j$ is adjacent to
all but two of the vertices in $\bar{i}$ (itself and another),
and the same is true of each vertex in $s = \{ w : d_w = \xi_i - 1 \}$.
It follows that, if there is no completable vertex, then $v$ and
every vertex in $s$ is adjacent to every vertex in $\beta_v$. 
Corollary 4 of lemma \ref{unique} also gives that $u$
is adjacent to one of the vertices in $\beta_v$. It follows
that that vertex must be completable, since its neighbourhood
includes all of $D$. \qed

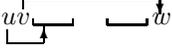
\begin{figure}
	\begin{picture}(200,30)
	\put(0,0){$uv$}
	\put(12,0){\put(0,0){\line(0,1){2}}\put(0,0){\line(1,0){15}}\put(15,0){\line(0,1){2}}}
	\put(40,0){\put(0,0){\line(0,1){2}}\put(0,0){\line(1,0){15}}\put(15,0){\line(0,1){2}}}
	\put(57,0){$w$}
	\put(8,6){\put(0,0){\line(0,1){4}}\put(0,4){\line(1,0){52}}\put(52,4){\vector(0,-1){6}}}
	\put(2,-2){\put(0,0){\line(0,-1){5}}\put(0,-5){\line(1,0){14}}\put(14,-5){\vector(0,1){6}}}
	\end{picture}
\caption{Illustrating lemma \protect\ref{unique3}. $u,v,w$ are vertices
		of unique degree; 
		the arrows represent the presence of required edges in the associated graph if the sequence is not forcing. There are no edges $uv,\,uw$. The illustration
		shows the case $\beta_v \ne \alpha_w$; the lemma applies equally when
		$\beta_v = \alpha_w$, and when there are further groups of good vertices.
	}
	\label{f.u3}
\end{figure}

\begin{lemma} \label{unique3}
If there are exactly 3 bad degrees, and there are three vertices $u,v,w$
of unique degree such that $d_v = d_u+1$, $u$ is good, $v$ is dull, 
$w$ is bad and not dull, then if none of $u,v,w$ are completable then
$\epsilon(u,v) = \epsilon(u,w) = 0$,
$\epsilon(v,w)=1$, $\epsilon(u, \beta_v) \ge 1$.
(see figure \ref{f.u3}).
\end{lemma}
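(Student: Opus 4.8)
The plan is to first fix the coarse structure of the bad vertices, and then to obtain each of the four relations by applying the completability test of Theorem \ref{dsc} (equivalently lemma \ref{unique}) to $v$, then $w$, then $u$. Since $u$ has degree $d_v-1$, the degree $d_v$ is bad, so $v$ is bad as well as dull; since $v$ is dull the degree $d_v+1$ is present and hence also bad; and $d_w$ is bad by hypothesis. With exactly three bad degrees these must be $d_v$, $d_v+1$ and $d_w$ (in particular $w\notin\beta_v$, and $w\neq v$ because $w$ is not dull while $v$ is). Consequently $B=\{v\}\cup\beta_v\cup\{w\}$, $\alpha_v=\{u\}$ and $\beta_v\neq\emptyset$. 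I expect the case $d_w=d_v+2$, where $\alpha_w=\beta_v$, to need no separate treatment.

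First I would read off $\epsilon(v,w)=1$ from part A of lemma \ref{unique}. For the non-completable unique vertex $v$ the set $t=(B-v)\setminus\beta_v$ reduces to $\{w\}$, so the conclusion $\epsilon(v,t)\geq1$ forces an edge $vw$; as $v,w$ are single vertices this is $\epsilon(v,w)=1$. Next I would establish $\epsilon(u,w)=0$ by applying Theorem \ref{dsc} to $w$. The bad neighbours of $w$ all lie in $\{v\}\cup\beta_v$, and we have just seen that $v$ is one of them. A neighbour in $\beta_v$ has degree $d_v+1$, for which the only vertex of degree $d_v$ is $v$ itself; since $v\in N[w]$ such a neighbour cannot obstruct completability. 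Hence the sole possible obstruction comes from the neighbour $v$, which demands that the unique vertex of degree $d_v-1=d_u$, namely $u$, lie in $N[w]$. As $w$ is not completable this demand must fail, so $u\notin N[w]$ and $\epsilon(u,w)=0$.

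Finally I would obtain $\epsilon(u,v)=0$ and $\epsilon(u,\beta_v)\geq1$ simultaneously by applying Theorem \ref{dsc} to $u$. Being good, $u$ can only have bad neighbours in $\{v\}\cup\beta_v$, and the previous step has removed $w$. A neighbour $v$ (degree $d_v$) requires the degree-$d_u$ set $\alpha_v=\{u\}$ to lie in $N[u]$, which is automatic, so it never obstructs completability. The only remaining way for $u$ to be non-completable is to have a neighbour in $\beta_v$ (degree $d_v+1$), which requires the degree-$d_v$ set $\{v\}$ to lie in $N[u]$, the obstruction being precisely $v\notin N[u]$. Non-completability of $u$ therefore forces both $\epsilon(u,\beta_v)\geq1$ and $\epsilon(u,v)=0$, which are the remaining claims (see figure \ref{f.u3}).

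The main obstacle is simply the exhaustive bookkeeping: for each of $u,v,w$ one must list every bad vertex it might meet together with the exact set of vertices one degree below, so that `not completable' can be converted into the single forced configuration asserted. The structural identification $B=\{v\}\cup\beta_v\cup\{w\}$ with $\alpha_v=\{u\}$ is what keeps this enumeration finite and unambiguous, and in particular it is what rules out any spurious obstruction from $\beta_v$-neighbours (whose only lower-degree vertex is $v$, which is already forced into the closed neighbourhood of both $v$ and $w$).
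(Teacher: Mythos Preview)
Your proof is correct and follows essentially the same route as the paper's: establish $\epsilon(v,w)=1$ from lemma~\ref{unique} applied to $v$, then $\epsilon(u,w)=0$ from non-completability of $w$, then the remaining two relations from non-completability of $u$. The only cosmetic difference is that for $\epsilon(u,w)=0$ the paper invokes the dull side of lemma~\ref{unique} (noting $(D-w)\setminus\alpha_w=\{u,v\}$, so $w$ cannot be adjacent to both, and it is already adjacent to $v$), whereas you argue directly via Theorem~\ref{dsc}; these are equivalent formulations of the same obstruction.
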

\begin{proof}
	The edge $vw$ follows from lemma \ref{unique}. The same lemma then implies that
	$w$ is not adjacent to $u$ because the conditions are such that $\{u,v \} = (D-w) \setminus \alpha_w$. It then follows that $u$ must be adjacent to $\beta_v$ and not to $v$. \qed
\end{proof}

[222 4{\bf 5}[66667]].
When the conditions of lemma \ref{unique3} apply, with $d_w > d_v$,
$u,v,w \in \ibar$, $d_v = \xi_i-1$, $j = \{ x : d_x \ge \xi_i \}$
and $\chi_{ij} + n_j = m_i$, the degree sequence is forcing. {\em Proof}.
The condition $\chi_{ij} + n_j = m_i$ implies that $v$ has no edge to $i$. It
follows that all its edges go to $\bar{i}$, and since $d_v = \xi_i-1$ it must
be adjacent to all of $\bar{i}$ except itself and one other vertex. By lemma 10,
there is no edge $uv$, so we deduce $N(v) = j$. The conditions are such that
$j+v$ is $d_v$-neighbourly, so then $v$ is completable. \qed

[{\bf 1}23333 [666]{\bf 7}].
If the conditions of lemma \ref{unique3} apply, such that $j = \alpha_w$
is good, then let $i$ be a distinct vertex set such that $(B - w) + u \subset i$.
If $\sigma(i,j) = m_i - n_i$ then the degree sequence is forcing. {\em Proof.}
After edges from $j$ have been accounted for, each vertex in $i$ has one stub
remaining, and that stub must be used to provide an edge to a bad vertex. 
Hence this final edge cannot be to $u$, for each vertex in $i$. Hence the only
way $u$ can have a bad edge is if it is adjacent to $w$, but the edge $uw$
is ruled out by lemma \ref{unique3}. \qed

\subsection{Conditions involving $\sigma=\tau$, unique vertices and bad stubs}

[223 {\bf 5}[667777]]. Suppose we have distinct sets $i,\, j = \{ w : d_w \ge \xi_i \}$ 
such that $\bar{i}$ is neighbourly and (\ref{xminedge}) gives $\sigma(i,j) = m_i$. 
If $\kappa(V, \, \xi_i - 1) = 1$, $\kappa(j, \xi_i) = 2$,
$\kappa(j, \, \xi_i+1) \ge m_c$ where $c = i \cap B$ and there are no dull degrees
in $\bar{i}$ except $\xi_i$ and $\xi_i-1$, 
then the degree sequence is forcing. {\em Proof}. Let $v$ be the vertex of degree $\xi_i-1$, 
let $s = j \cap D$, and suppose no vertex is completable. 
$v$ is not in $j$ so it can have no edges to $i$. Therefore $v$ 
cannot be adjacent to both the vertices of $s$, or it will be completable. 
It follows that the vertices of $s$ are adjacent, since whichever one is
not adjacent to $v$ must be adjacent to all other members of $j$ when (\ref{xminedge})
gives $\sigma(i,j) = m_i$. The degree of $v$ is such that it must be adjacent to at least
one of $s$, so we deduce that a vertex in $s$ has all of $\bar{i} \cap D$ in its closed
neighbourhood. 
Such a vertex requires a bad edge to $i$ to avoid being completable,
but under the given conditions, all the bad stubs in $i$ have been used up by edges to
vertices of degree $\xi_i + 1$. It follows that there must be a completable vertex. \qed

[122 555{\bf 6}778], [112 4445{\bf 6}[78]].
Suppose there is a set $i$ such that condition (\ref{ihcond}) holds
and $\kappa(V,\xi_i)=1$ and $\epsilon(c,h) \ge m_c$.
Then if either there is no bad degree less than $\xi_i$ in $\ibar$,
or there is one and it is unique and equal to $\xi_i-1$, 
then the degree sequence is forcing.
{\em Proof}. Let $u,v$ be the vertices of degree $\xi_i-1, \,\xi_i$.
When the conditions apply, all the vertices of degree above $d_v$ are
adjacent to $v$, and $v$ has no edge to $c$. It follows that, to avoid
being completable, $v$ requires
an edge to a bad vertex not in $(h + v) \cup c$, 
because $h+v$ is $d_v$-neighbourly. If there is no such vertex,
then this cannot happen, so the sequence is forcing. If 
there is a single such vertex ($u$) and its degree is $\xi_i-1$, then 
$u$ has edges to all of $v$ and $h$ and none to $c$, therefore
its closed neighbourhood is $d_u$-neighbourly, so it is completable.
\qed

[22333 555{\bf 6} 8]. Let $j$ be the set of good vertices of degree above some value,
and $g$ be the set of all the other good vertices. Then, if there is no completable vertex,
and using (\ref{sigmtau}), (\ref{jgood}),
\be
\epsilon(B,j) \ge m_j - n_j(n_j-1) - (m_g - n_g) .  \label{jgeps}
\ee
Now suppose we have that, when we use the right hand side of
 this expression to give $\sigma(B,j)$ in (\ref{mBbound}),
we find that the bound (\ref{mBbound}) is saturated. 
In this case, any argument which leads to the conclusion that $\epsilon(B,j)$ is in fact
larger than and not equal to $m_j - n_j(n_j-1) - (m_g - n_g)$
will lead to the condition (\ref{mBbound}) not being satisfied, so the degree sequence is forcing. 
We consider the case where there are exactly two bad degrees, $d_k$ and $d_v$,
and only one vertex $v$ has the higher of these
two degrees, and $\alpha_v \subset j$.
Let $k = B - v$. If no vertex is completable,
then, on the assumption that (\ref{jgeps}) is an equality, we must have
\be
\epsilon(k, \bar{B}) \ge 1 + \sum_{w \in j} \max\left(1, \, d_w +1+n_B-n\right).  \label{epscgsum}
\ee
For, in the conditions under consideration, each vertex in $j$ is adjacent to all the others in $j$,
and therefore requires an edge to $k$ in order that its closed neighbourhood is not neighbourly. 
Each one of 
those of degree higher than $n-n_k-2 = n-1-n_B$ requires $d_w-(n-1-n_B)$ edges to $k$ since 
the closed neighbourhood may not contain all of $D$, and the conditions are such that $k$ is not dull.
This gives the sum in (\ref{epscgsum}). The additional $1$ is owing to the fact
that $v$ cannot be adjacent to all the vertices of degree $d_k-1$, 
therefore at least one of these
requires an edge to $k$. If we now have $\epsilon(k,\bar{B}) > m_k - n_k$ then the condition
(\ref{jgood}) is not satisfied, so the degree sequence must be forcing.

[22{\bf 3}4 [66666]{\bf 7}].
If there are exactly three bad degrees, and there is a dull bad vertex $u$
of unique degree, and a bad vertex $v$ of unique degree $d_v \ge d_u + 4$,
and only one good degree greater than $d_u$, then if 
\be
m_B &<& d_v + n_j + 2 + \delta_B ,   \label{mBkappa}  \\
m_g - n_g &\le& d_v - n_j ,        \label{mgcond}  \\
d_v &\ge& n_j + 2            \label{dvnjcond}
\ee
then the degree sequence is forcing, where 
$j = \{ w : d_w = d_v - 1 \} = \alpha_v$ and 
$g = \{ w : d_w < d_u\}$.
{\em Proof}. By lemma \ref{unique},
if there is no completable vertex then $u$
must be adjacent to $v$, and therefore $v$ may not be adjacent to all the vertices
of degree less than $d_u$. There are $n - n_j - n_B$ such vertices. It follows that 
\be
\sigma(v,j) &\ge& d_v - (n-n_j-n_B-1) - (n_B-1)  \nonumber \\
&=& d_v - n + n_j +2.            
\ee
Let $t$ be the number of vertices in $j$ that are not
adjacent to all the other vertices in $j$. First consider the case $t=0$.
The conditions are such that the vertices in $j$ are
good, and any of them that are adjacent to both $v$ and all the others in $j$
require an edge to a vertex in $B - v$, so they have two bad edges. Therefore
when $t=0$ we have $\sigma(B,j) \ge 2 \sigma(v,j) + (n_j - \sigma(v,j))
= \sigma(v,j) + n_j$.
Substituting this into (\ref{mBbound}) gives (\ref{mBkappa}).
Next suppose $t>0$. $t$ cannot be equal to 1, so the smallest non-zero value is $t=2$.
If neither of these vertices are adjacent to $v$, then the count of bad edges is unchanged
and we obtain (\ref{mBkappa}) again. If one of them is adjacent to $v$, then the
only way it can avoid having another bad edge is if it has $d_v-2$
good neighbours. Therefore it has $(d_v-2)-(n_j-2) = d_v - n_j$ good neighbours
not in $j$. Under the condition (\ref{mgcond}) this is either not
possible, or else it is possible and no other vertex in $j$ can be adjacent to a vertex
in $g$. In the latter case, the second vertex in $j$ that is not adjacent to all others
in $j$ can have at most $n_j-2$ good neighbours, so it has $d_v - n_j + 1$ bad
neighbours. In this case we obtain 
$\sigma(B,j) \ge 2(\sigma(v,j)-2) + (n_j-\sigma(v,j)) + 1 + (d_v - n_j + 1)
= \sigma(v,j) + d_v -2$. Using (\ref{dvnjcond}) we find
that $\sigma(B,j)$ is no smaller than before, and the condition
(\ref{mBkappa}) applies again. By similar arguments, $t > 2$ also does
not lead to a smaller $\sigma(B,j)$. \qed

\section{Statistics and extension of the concepts}

\begin{table}
\begin{tabular}{ccccccccc}
$n$ & seq & f-ds & graphs & ds-r & forced & weakly ds-r  & good d & good G \\
\hline
1 & 1       & 1         & 1    &  1     &  1  &    1   & 1 & 1 \\
2 & 2       & 2         & 2    &  2     &  2  &    2   & 2 & 2 \\
\hline
3 & 4       & 4         &   4  &  4     &  4  &    4   & 2 & 2\\
4 & 11     & 10      &  11  &  10   &  10     &   11   & 6 & 6 \\
5 & 31     &  29     & 34     &  32  &  30    &    34  & 9 & 9 \\
6 & 102   &  88     &  156   & 128  &  106  &  152  &  30 & 34  \\
7 & 342   & 304     & 1044   & 768  &  616  &  930  &  54 & 76  \\
8 & 1213 & 1034   & 12346  & 7311 & 4385  &  9077   & 183 & 542 \\
9 & 4361 & 3697   & 274668 & 126152  &  60259 & 147812  & 379 & 2731  \\
10 & 16016 & 13158  & 12005168 & 4045030 & 1194438 & 4472390  & 1256 & 71984
\end{tabular}
\caption{Data for small graphs. The columns are: $n$=number of vertices; seq = number of graphic sequences;
f-ds = number of forcing degree sequences;
graphs=number of graphs; ds-r = number of ds-reconstructible graphs; 
forced = number of graphs whose degree sequence is forcing; weakly dsr = number of graphs uniquely
specified by one card and the degree sequence; good  d = number of sequences having all good degrees;
good G = number of graphs having all good vertices.
}
\label{t.data}
\end{table}

Table \ref{t.data} lists information about the number of ds-reconstructible graphs, the number
of ds-reconstruction-forcing degree sequences, and the number of graphs whose degree sequence is forcing
(the latter graphs are a subset of the ds-reconstructible graphs), for graphs on up to 10 vertices.
The numerical evidence is that the proportion of all graphs of given order $n$
that are ds-reconstructible is a decreasing function of $n$, and therefore the fraction of graphs
of large $n$ that are ds-reconstructible is small and may tend to zero as $n \rightarrow \infty$.
This does not rule out that these graphs may contribute a useful part of a proof 
of the reconstruction conjecture (if one is possible) based on finding a collection of
reconstruction methods which together cover all graphs. 

We have defined the notion of ds-reconstructibility so that it is simple and self-contained.
It is not defined in such a way as to capture as wide a class of graphs as possible, but
rather a readily-defined and -recognized class of graphs. For this reason it does not include
some graphs whose reconstruction is easy, such as the path graphs on more than three vertices.

There are several natural extensions to the idea of ds-reconstructibility that include path graphs
and some other graphs. For example, there is a class of graphs such that each graph in the class
contains a card $\Gv$, such that, up to isomorphism, there is only one graph having both the same 
degree sequence as $G$ and having $\Gv$ as a subgraph. 
Let us call such graphs `weakly ds-reconstructible'. This is a larger class than the
ds-reconstructible graphs (as we have defined them) because it includes cases where
degree information in $\Gv$ allows for more than one assignment of neighbours of $v$, but 
all the assignments lead to the same graph, owing to automorphism of $\Gv$. The path
graphs are an example of this.

\begin{figure}
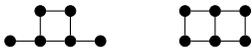

\basicgraph{0.4}
{{(0,0)/a}, {(1,0)/b}, {(2,0)/c}, {(3,0)/d}, {(1,1)/e}, {(2,1)/f}}
{a/b, b/c, b/e, c/d, c/f, e/f} 
\hspace{5mm}
\basicgraph{0.4}
{{(0,0)/a}, {(1,0)/b}, {(2,0)/c}, {(0,1)/d}, {(1,1)/e}, {(2,1)/f}}
{a/b, a/d, b/c, b/e, c/f, d/e, e/f} 
\caption{These two graphs, and their complements, are the smallest graphs that are not
weakly ds-reconstructible (i.e. reconstructible from one card and the degree sequence).}
\label{fig.6}
\end{figure}

The smallest graphs that are not weakly ds-reconstructible have six vertices; they are shown in figure
\ref{fig.6}.

Unigraphic degree sequences produce graphs that are obviously reconstructible from the degree sequence.

It can happen that, for some degree sequence,
(e.g. [11222]), all but one of the graphs of that degree sequence are ds-completable.
But in that case we can reconstruct all the graphs of that degree sequence, because the presence
of a completable card is recognisable. If such a card is present then it is used to reconstruct
the graph. If no such card is found then we know that the graph is the unique graph of the
given degree sequence that has no completable vertex.

Another extension is to ask, what further information would be required to make a card
uniquely completable, when degree information alone is not sufficient? One might for example
consider the class of graphs uniquely specified by a pair of cards and the degree sequence,
or by a single card and further information known to be recoverable from the deck.

The large number of equations in this paper is a symptom of the fact that I have not found
any single simply-stated criterion sufficient to fully distinguish the classes of forcing
and non-forcing degree sequences. A further avenue for investigation would be to
seek a way to manipulate a ds-reconstructible graph so as to obtain 
a non-ds-reconstructible graph of the same degree sequence, if one exists. Another
avenue is to try to reduce the problem to a small number of canonical forms.

\bibliographystyle{plain}
\bibliography{graphrefs}

\end{document}